\begin{document}

\newtheorem{theorem}{Theorem}[section]
\newtheorem{corollary}[theorem]{Corollary}
\newtheorem{definition}[theorem]{Definition}
\newtheorem{conjecture}[theorem]{Conjecture}
\newtheorem{question}[theorem]{Question}
\newtheorem{lemma}[theorem]{Lemma}
\newtheorem{proposition}[theorem]{Proposition}
\newtheorem{example}[theorem]{Example}
\newenvironment{proof}{\noindent {\bf
Proof.}}{\rule{3mm}{3mm}\par\medskip}
\newcommand{\remark}{\medskip\par\noindent {\bf Remark.~~}}
\newcommand{\pp}{{\it p.}}
\newcommand{\de}{\em}

\newcommand{\JEC}{{\it Europ. J. Combinatorics},  }
\newcommand{\JCTB}{{\it J. Combin. Theory Ser. B.}, }
\newcommand{\JCT}{{\it J. Combin. Theory}, }
\newcommand{\JGT}{{\it J. Graph Theory}, }
\newcommand{\ComHung}{{\it Combinatorica}, }
\newcommand{\DM}{{\it Discrete Math.}, }
\newcommand{\ARS}{{\it Ars Combin.}, }
\newcommand{\SIAMDM}{{\it SIAM J. Discrete Math.}, }
\newcommand{\SIAMADM}{{\it SIAM J. Algebraic Discrete Methods}, }
\newcommand{\SIAMC}{{\it SIAM J. Comput.}, }
\newcommand{\ConAMS}{{\it Contemp. Math. AMS}, }
\newcommand{\TransAMS}{{\it Trans. Amer. Math. Soc.}, }
\newcommand{\AnDM}{{\it Ann. Discrete Math.}, }
\newcommand{\NBS}{{\it J. Res. Nat. Bur. Standards} {\rm B}, }
\newcommand{\ConNum}{{\it Congr. Numer.}, }
\newcommand{\CJM}{{\it Canad. J. Math.}, }
\newcommand{\JLMS}{{\it J. London Math. Soc.}, }
\newcommand{\PLMS}{{\it Proc. London Math. Soc.}, }
\newcommand{\PAMS}{{\it Proc. Amer. Math. Soc.}, }
\newcommand{\JCMCC}{{\it J. Combin. Math. Combin. Comput.}, }
\newcommand{\GC}{{\it Graphs Combin.}, }

\title{Anti-Ramsey numbers of graphs with some decomposition family sequences
\thanks{This work is supported by the National Natural Science Foundation of China (Nos.11531001 and 11271256),  the Joint NSFC-ISF Research Program (jointly funded by the National Natural Science Foundation of China and the Israel Science Foundation (No. 11561141001)). The first author is supported by the Fundamental Research Funds for the Central Universities (No. WK0010460004) and the China Postdoctoral Science Foundation (No. BH0010000015).
\newline \indent $^{\dagger}$Correspondent author:
Xiao-Dong Zhang (Email: xiaodong@sjtu.edu.cn)}}
\author{ Long-Tu Yuan  \\
{\small School of Mathematical Sciences}\\
{\small University of Science and Technology of China} \\
{\small  96 Jinzhai Road, Hefei, 230026, P.R. China}\\
{\small Email: longtu@ustc.edu.cn}\\
Xiao-Dong Zhang$^{\dagger}$   \\
{\small School of Mathematical Sciences, MOE-LSC, SHL-MAC
}\\
{\small Shanghai Jiao Tong University} \\
{\small  800 Dongchuan Road, Shanghai, 200240, P.R. China}}
\date{}
\maketitle
\begin{abstract}
For a given graph $H$, the anti-Ramsey number of $H$ is the maximum number of colors in an edge-coloring of a complete graph which does not contain a rainbow copy of $H$. In this paper, we extend the decomposition family of graphs to the decomposition family sequence of graphs and show that $K_5$ is determined by its decomposition family sequence. Based on this new graph notation, we determine  the anti-Ramsey numbers for new families of graphs, including the Petersen graph, vertex-disjoint union of cliques, etc., and characterize the extremal colorings.
\end{abstract}

{{\bf Key words:} Anti-Ramsey number; Decomposition family sequence; Progressive induction.}

{{\bf AMS Classifications:} 05C35; 05D99.}
\vskip 0.5cm

\section{Introduction}
\subsection{Basic notations and results}
The graphs considered in this paper are finite, undirected, and simple (no loops or multiple edges). Let $G=(V(G), E(G))$ be a graph, where $V(G)$ is the vertex set with cardinality $v(G)$ and $E(G)$ is the edge set with cardinality $e(G)$.  If $x$ is a vertex of $G$, the {\it neighborhood} of $x$ in $G$ is denoted by $N_G(x)=\{y\in V(G):(x,y)\in E(G)\}$, or when it is clear, simply by $N(x)$. The {\it degree} of $x$ in $G$, denoted by $deg_{G}(x)$, or $d_{G}(x)$, is the size of $N_G(x)$. We use $\delta(G)$ and $\Delta(G)$ to denote the minimum and maximum degrees, respectively, in $G$. For a subset $X\subset V(G)$, let $G[X]$ denote the subgraph of $G$ induced by $X$. Denote by $\overline{G}$ the complement graph of $G$. Denote by $G\cup H$ the vertex disjoint union of $G$ and $H$ and by $k\cdot G$ the vertex disjoint union of $k$ copies of a graph $G$. Denote by $G\vee H$ the graph obtained from $G\cup H$ by adding edges between each vertex of $G$ and each vertex of $H$. The subscript in the case of graphs indicates the number of vertices, e.g., denote by $P_{k}$ a path on $k$ vertices, denote by $C_{k}$ a cycle on $k$ vertices, $S_{k}$ a star on $k$ vertices, $K_{n}$ the complete graph on $n$ vertices, $K_{n_1,\ldots,n_p}$ the complete $p$-partite graph $\overline{K}_{n_1}\vee\ldots\vee \overline{K}_{n_p}$,
denote by $M_{k}$ the disjoint union of $\lfloor\frac{k}{2}\rfloor$ disjoint copies of edges and $\lceil\frac{k}{2}\rceil-\lfloor\frac{k}{2}\rfloor$ isolated vertex.

Let $\mathcal{L}$ be a family of graphs. The {\it Tur\'{a}n number} of $\mathcal{L}$, ex$(n,\mathcal{L})$, is the maximum number of edges in a graph $G$ of order $n$  which does not contain a copy of any $L\in \mathcal{L}$.
In 1941, Tur\'{a}n \cite{turan1941} proved that the extremal graph for $K_{p+1}$ is
the complete $p$-partite graph on $n$ vertices which is balanced, in that the part sizes are as equal as possible (any two sizes differ by at most $1$). This balanced complete $p$-partite graph on $n$ vertices is the {\it Tur\'{a}n graph} $T(n,p)$ and denote by $t(n,p)$ the size of Tur\'{a}n graph $T(n,p)$.
Later, in 1946, Erd\H{o}s and Stone \cite{erdHos1946} proved the following well-know theorem.

\begin{theorem}\cite{erdHos1946}\label{erdos-stone}
For all integers $p\geq 1$ and $N\geq 1$, and every $\epsilon >0$, there exists an integer $n_0$ such that every graph with $n\geq n_0$ vertices and at least
\[t(n,p)+\epsilon n^2\]
edges contains $T(N,p+1)$ as a subgraph.
\end{theorem}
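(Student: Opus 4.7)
The plan is to prove Theorem~\ref{erdos-stone} by induction on $p$, amplifying a copy of $T(M,p)$ with $M\gg N$ into a copy of $T(N,p+1)$ via two rounds of pigeonhole. The scheme is the classical one of Erd\H{o}s and Stone, and the main delicacy lies in choosing the cascade of parameters so that all the counting inequalities fit together.

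For the base case $p=1$ I would show that any $n$-vertex graph $G$ with at least $\epsilon n^2$ edges contains $K_{N,N}=T(N,2)$. This is done by double-counting cherries: by convexity,
\[
\sum_{v\in V(G)}\binom{d(v)}{2}\ \ge\ n\binom{2\epsilon n}{2},
\]
so some set of $N$ vertices has at least $N$ common neighbors provided $n$ is large in terms of $N$ and $\epsilon$. For the inductive step I fix $p\ge 2$, assume the theorem for $p-1$, and let $G$ have $n$ vertices and at least $t(n,p)+\epsilon n^2$ edges. Since $t(n,p)\ge t(n,p-1)$, the inductive hypothesis applied with a very large integer $M=M(N,p,\epsilon)$ gives, for $n\ge n_0$, a copy of $T(M,p)$ inside $G$ with vertex parts $V_1,\ldots,V_p$, each of size $M$.

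Next I would look at the interaction between $V_0:=V_1\cup\cdots\cup V_p$ and $W:=V(G)\setminus V_0$. Set a small $\delta>0$ depending on $N,p,\epsilon$, and call a vertex $w\in W$ \emph{good} if $|N(w)\cap V_i|\ge (1-\delta)M$ for every $i=1,\ldots,p$. The key counting step is to show that there are at least $\epsilon' n$ good vertices: if too few vertices were good, then a positive proportion of vertices of $G$ would miss at least $\delta M$ neighbors in some $V_i$, which (together with the Tur\'an bound inside each part) would push the total edge count of $G$ below $t(n,p)+\epsilon n^2$, contradicting the hypothesis. Choosing the constants in the right order, $N\ll 1/\delta\ll M\ll n$, is what makes this inequality hold.

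Finally I would apply pigeonhole to the good vertices. Each good vertex $w$ selects, inside every part $V_i$, a subset of size at least $(1-\delta)M$; since the number of such subsets in each $V_i$ is bounded in terms of $M$ only, among the $\epsilon' n$ good vertices (with $n$ large enough) one finds $N$ of them whose chosen subsets coincide in every part. The intersection of these $p$ subsets, one in each $V_i$, still has size at least $M-pN\delta M\ge N$, giving $N$ common neighbors in each part. These $N$ good vertices, together with $N$ chosen vertices from each part, form the desired $T(N,p+1)$. The main obstacle is purely bookkeeping: choosing $M$ large enough (essentially doubly exponential in $N$) so that the second pigeonhole succeeds, and verifying that the edge-deficit argument in the third paragraph really does force a linear number of good vertices; once those quantitative choices are made, the three steps combine cleanly.
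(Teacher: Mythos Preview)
The paper does not prove Theorem~\ref{erdos-stone} at all; it is quoted from \cite{erdHos1946} and used as a black box in the proofs of Theorems~\ref{main1} and~\ref{main2}. So there is no ``paper's own proof'' to compare against.

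That said, your sketch is the classical Erd\H{o}s--Stone scheme, with two points that need repair. First, in the base case, the quantity $\sum_v \binom{d(v)}{2}$ only shows that some \emph{pair} of vertices has many common neighbours, yielding $K_{2,m}$, not $K_{N,N}$; you want $\sum_v \binom{d(v)}{N}$ and the K\H{o}v\'ari--S\'os--Tur\'an averaging instead. Second, the heart of your third paragraph does not work as written: a non-good vertex misses $\delta M$ edges into the fixed set $V_0$ of size $pM=O(1)$, and this says nothing about the global edge count $e(G)\ge t(n,p)+\epsilon n^2$, since all the missing edges could be made up inside $W=V(G)\setminus V_0$. No amount of ``Tur\'an bound inside each part'' rescues this, because you have no structural control on $G[W]$. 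The standard fix is to first pass (by iterated deletion of low-degree vertices) to an induced subgraph $G'$ on $n'\ge \epsilon' n$ vertices with minimum degree at least $(1-1/p+\epsilon')n'$, locate $T(M,p)$ inside $G'$ by induction, and then observe that \emph{every} vertex of $G'$ is good, so the final pigeonhole step goes through with $n'$ in place of $n$. With these two adjustments your outline becomes the textbook proof.
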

In many ordinary extremal problems the minimum chromatic number plays a decisive role. Let $\mathcal{L}$ be a family of graphs, the {\it subchromatic number} $p(\mathcal{L})$ of $\mathcal{L}$ is defined by
$$p(\mathcal{L})=\min\{\chi(\mathcal{L}):L\in \mathcal{L}\}-1.$$
In 1966, Erd\H{o}s and Simonovits \cite{erdHos1966} proved the following.
\begin{theorem}\cite{erdHos1966}\label{p+1 chromatic}
If $\mathcal{L}$ is a family of graphs with subchromatic number $p>0$, then $$\emph{ex}(n,\mathcal{L})=\left(1-\frac{1}{p}\right){n \choose 2}+o(n^2).$$
\end{theorem}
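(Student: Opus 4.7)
The plan is to bracket $\text{ex}(n,\mathcal{L})$ between two quantities that agree up to $o(n^2)$ by combining a concrete $\mathcal{L}$-free construction with the Erdős–Stone theorem (Theorem~\ref{erdos-stone}).

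For the lower bound I would use the Turán graph $T(n,p)$ itself. By the definition of the subchromatic number, every $L\in\mathcal{L}$ satisfies $\chi(L)\geq p+1$, while $T(n,p)$ is $p$-chromatic, so $T(n,p)$ contains no member of $\mathcal{L}$. A short direct count (parts of size $\lfloor n/p\rfloor$ or $\lceil n/p\rceil$) shows $t(n,p)=\bigl(1-\tfrac{1}{p}\bigr)\binom{n}{2}+O(n)$, hence
\[
\text{ex}(n,\mathcal{L})\ \geq\ t(n,p)\ =\ \Bigl(1-\frac{1}{p}\Bigr)\binom{n}{2}+o(n^2).
\]

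For the upper bound, fix $\varepsilon>0$; I want to show that for $n$ large enough, any graph $G$ on $n$ vertices with $\bigl(1-\tfrac{1}{p}\bigr)\binom{n}{2}+\varepsilon n^2$ edges contains some $L\in\mathcal{L}$. By definition of $p(\mathcal{L})$ there exists a fixed graph $L_0\in\mathcal{L}$ with $\chi(L_0)=p+1$; set $N=(p+1)\,v(L_0)$, so that $T(N,p+1)$ is balanced with parts larger than any color class of $L_0$, and consequently $L_0\subseteq T(N,p+1)$ by mapping each color class into its own part. Since $t(n,p)=\bigl(1-\tfrac{1}{p}\bigr)\binom{n}{2}+O(n)$, for $n$ large the edge count of $G$ exceeds $t(n,p)+\tfrac{\varepsilon}{2}n^2$. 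Theorem~\ref{erdos-stone} applied with this $N$ and $\varepsilon/2$ then yields $T(N,p+1)\subseteq G$, and therefore $L_0\subseteq G$, as required. This gives $\text{ex}(n,\mathcal{L})\leq\bigl(1-\tfrac{1}{p}\bigr)\binom{n}{2}+\varepsilon n^2$ for all sufficiently large $n$.

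Since $\varepsilon>0$ is arbitrary, the two bounds combine to yield the claimed asymptotic. The proof is essentially a packaging exercise: the genuine analytic content is hidden inside Theorem~\ref{erdos-stone}, and the only points requiring care are (i) exhibiting a single $L_0\in\mathcal{L}$ with $\chi(L_0)=p+1$ (guaranteed by the \emph{min} in the definition of $p(\mathcal{L})$, not merely an infimum) and (ii) choosing $N$ large enough so that $T(N,p+1)$ actually contains $L_0$ as a subgraph. Neither is a real obstacle; the only place where one might slip is in conflating $\binom{n}{2}$ with $n^2/2$ when estimating $t(n,p)$, but the discrepancy is $O(n)$, which is absorbed into the $o(n^2)$ error term.
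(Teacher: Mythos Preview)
Your proof is correct and is the standard derivation of the Erd\H{o}s--Simonovits asymptotic from the Erd\H{o}s--Stone theorem. Note, however, that the paper does not actually prove this statement: Theorem~\ref{p+1 chromatic} is quoted as a known result from \cite{erdHos1966} with no accompanying argument, so there is no ``paper's own proof'' to compare against. What you have written is exactly the textbook route (Tur\'an graph for the lower bound, Erd\H{o}s--Stone plus an embedding of a minimum-chromatic $L_0$ into $T(N,p+1)$ for the upper bound), and your care about the existence of $L_0$ achieving the minimum and the $O(n)$ slack in $t(n,p)$ is appropriate.
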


An {\it  edge-colored graph} is a graph $G=(V(G),E(G))$ with a map $c:E(G)\rightarrow S$. The elements of $S$ are called the {\it colors}. A subgraph of an edge-colored graph is {\it rainbow} (or {\it polychromatic}) if all of its edges have different colors. Let $\mathcal{F}$ be a family of graphs. For the purpose of this paper, we call an edge-coloring of $K_n$ that contains no rainbow copy of any graph in $\mathcal{F}$ an {\it $\mathcal{F}$-free coloring} and call an edge-coloring of $K_n$ a {\it coloring} of $K_n$ for convenience. The {\it anti-Ramsey number} AR$(n,\mathcal{F})$ is the maximum number of colors in an $\mathcal{F}$-free coloring of $K_n$. Anti-Ramsey numbers were introduced by Erd\H{o}s, Simonovits and S\'{o}s \cite{erdossim1975}. Various results about this extremal function have been obtained, see \cite{Alon1983,Alon2015,Axenovich2004,Chen2008,Gorgol2016,Jahan2016,Jiang2002,Jiang2009,Jiang2003,Jiang2004,JJ2006,JJ2002,JJ2005,Sar2005,Sch2004}.

In 1975, Erd\H{o}s, Simonovits and S\'{o}s \cite{erdossim1975} proved the following results.
\begin{theorem}\cite{erdossim1975}
Let $\mathcal{F}$ be an arbitrary graph family with $\mathcal{F}^-=\{H-e:H\in \mathcal{F}, e\in E(H)\}$ and $p(\mathcal{F}^-)=p$. Then
$$\emph{AR}(n,\mathcal{F})=t(n,p)+o(n^2).$$
\end{theorem}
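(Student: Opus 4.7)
The plan is to establish matching bounds of the form $t(n,p)+o(n^2)$ from below and above.

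For the lower bound I would exhibit an $\mathcal{F}$-free edge-coloring of $K_n$ using $t(n,p)+1$ colors. Fix an embedding of the Tur\'an graph $T(n,p)$ as a spanning subgraph of $K_n$, assign each of its $t(n,p)$ edges a distinct private color, and color every remaining edge with one additional color $\star$. A would-be rainbow copy of some $H\in\mathcal{F}$ could use at most one $\star$-edge $e$, so $H-e$ would have to embed into $T(n,p)$; but $p(\mathcal{F}^-)=p$ forces $\chi(H-e)\geq p+1>p=\chi(T(n,p))$, ruling this out. Hence $\emph{AR}(n,\mathcal{F})\geq t(n,p)+1$.

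For the upper bound, given any $\mathcal{F}$-free coloring of $K_n$ with $c$ colors, I would form a \emph{representing subgraph} $G$ by selecting one edge from each color class, so that $e(G)=c$ and $G$ is rainbow. By the definition of $p(\mathcal{F}^-)=p$, fix once and for all a pair $(H^{\star},e^{\star})$ with $H^{\star}\in\mathcal{F}$, $e^{\star}\in E(H^{\star})$ and $\chi(H^{\star}-e^{\star})=p+1$. Assuming toward contradiction that $c\geq t(n,p)+\varepsilon n^{2}$, Theorem~\ref{erdos-stone} applied to the $(p+1)$-chromatic graph $H^{\star}-e^{\star}$ gives $\emph{ex}(n,H^{\star}-e^{\star})=t(n,p)+o(n^{2})$, and the Erd\H{o}s--Simonovits supersaturation principle then supplies $\Omega(n^{v(H^{\star})})$ labeled embeddings $\phi\colon H^{\star}-e^{\star}\hookrightarrow G$.

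The heart of the argument converts this abundance into a forbidden rainbow $H^{\star}$. For each such $\phi$, set $uv=\phi(e^{\star})$; the edge $uv\in K_n$ carries some color $c(uv)$ whose unique $G$-representative is an edge $f_{uv}$. Were $c(uv)$ absent from the colors of $\phi(H^{\star}-e^{\star})$, then $\phi(H^{\star}-e^{\star})\cup\{uv\}$ would be a rainbow $H^{\star}$, contradicting $\mathcal{F}$-freeness. So $\phi$ must route some edge of $H^{\star}-e^{\star}$ onto $f_{uv}$. A double-count then closes the argument: fixing $uv$ (and hence $f_{uv}$) pins down at most four of the $v(H^{\star})$ vertex-images, giving $O(n^{v(H^{\star})-3})$ admissible $\phi$; summing over the $\binom{n}{2}$ choices of $uv$ yields only $O(n^{v(H^{\star})-1})$ embeddings in total, which for large $n$ contradicts the $\Omega(n^{v(H^{\star})})$ lower bound. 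This forces $c\leq t(n,p)+\varepsilon n^{2}$, as required.

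The principal obstacle is justifying the supersaturation step: Theorem~\ref{erdos-stone} on its own produces only one copy of $H^{\star}-e^{\star}$, whereas the double-count needs polynomially many. The standard remedy is either to invoke the Erd\H{o}s--Simonovits supersaturation theorem directly, or to iterate Erd\H{o}s--Stone by first embedding an arbitrarily large blow-up $T(N,p+1)\subseteq G$ and then harvesting the many copies of $H^{\star}-e^{\star}$ inside it. A minor side issue, easily checked, is that the degenerate case $f_{uv}=uv$ cannot arise: since $\phi$ is injective on vertices, the equation $\phi(f')=\phi(e^{\star})$ would force $f'=e^{\star}$, contradicting $f'\in E(H^{\star}-e^{\star})$, so the counting bounds above are clean.
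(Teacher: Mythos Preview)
The paper does not prove this statement; it is quoted from Erd\H{o}s--Simonovits--S\'{o}s \cite{erdossim1975} without proof, so there is no ``paper's own proof'' to compare against. That said, your argument is essentially correct and is along the lines of the original.

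Your lower bound is the standard construction and is fine (you should note explicitly that the case where the putative rainbow $H$ uses \emph{no} $\star$-edge is also excluded, since then $H$ itself sits in $T(n,p)$; your phrasing ``at most one'' covers this but the subsequent sentence reads as if exactly one is used).

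Your upper bound is valid. The supersaturation--plus--double--count route works, and you have correctly identified that invoking full supersaturation is the only nontrivial external input. However, it is heavier than necessary: the second remedy you list---use Theorem~\ref{erdos-stone} once to locate a $T(N,p+1)\subseteq G$ with $N$ large---already finishes the job without any counting. Indeed, pick $u,v$ in the same part of this $T(N,p+1)$, let $f_{uv}\in G$ be the representative of $c(uv)$, delete from $T(N,p+1)$ the at most two vertices of $f_{uv}$ other than $u,v$, and embed $H^\star-e^\star$ in what remains with the endpoints of $e^\star$ placed at $u,v$ (possible since $N$ is large and, as $\chi(H^\star)=p+2$, every proper $(p+1)$-colouring of $H^\star-e^\star$ puts those endpoints in the same class). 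The resulting $\phi(H^\star-e^\star)\cup\{uv\}$ is then rainbow by construction, giving the contradiction directly. This is closer in spirit to the original 1975 argument and avoids the $\Omega(n^{v(H^\star)})$ versus $O(n^{v(H^\star)-1})$ bookkeeping.
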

\begin{theorem}\cite{erdossim1975}\label{antiramsey for Kp}
For any $p\geq2$ and all sufficiently large $n$, $n\geq n_0(p)$, we have
$$
\emph{AR}(n,K_{p+2})=t(n,p)+1,
$$
and any coloring achieving this bound is obtained by taking a rainbow $T(n,p)$ and coloring all edges in its complement with the same (extra) color.
\end{theorem}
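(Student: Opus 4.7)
The argument divides into the lower-bound construction, the upper-bound extension, and the structural uniqueness. For the lower bound I verify that the construction described in the theorem is indeed $K_{p+2}$-rainbow-free: for any $p+2$ vertices distributed with multiplicities $n_1,\dots,n_p$ among the parts of $T(n,p)$, the constraint $\sum n_i = p+2$ over $p$ parts forces $\sum\binom{n_i}{2}\geq 2$ by convexity, so every $K_{p+2}$ contains at least two edges in the complement of $T(n,p)$ and these share the single extra color. This yields $\mathrm{AR}(n,K_{p+2})\geq t(n,p)+1$.

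For the upper bound, let $c$ use $N\geq t(n,p)+2$ colors and pick one edge per color class to form a rainbow \emph{representative subgraph} $G$ with $e(G)=N>\mathrm{ex}(n,K_{p+1})=t(n,p)$. By Tur\'an, $G$ contains a $K_{p+1}$, say on $U=\{v_1,\dots,v_{p+1}\}$ using a palette $C_U$ of $\binom{p+1}{2}$ distinct colors. The goal is to find $u\notin U$ such that $c(uv_1),\dots,c(uv_{p+1})$ are pairwise distinct and disjoint from $C_U$; then $U\cup\{u\}$ spans a rainbow $K_{p+2}$ in $c$. The main obstacle is producing this extension: with only two edges above the Tur\'an threshold, $G$ need only contain a single $K_{p+1}$. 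My plan is to boost to a supersaturation count, either by slightly thickening $G$ inside the color classes of $c$ and applying Theorem~\ref{erdos-stone}, or by iterating the Tur\'an bound over many choices of $U$, so that $G$ contains many copies of $K_{p+1}$. Assuming no extension works, every external vertex $u$ imposes an \textbf{obstruction} to every such $K_{p+1}$-copy---either two of the edges $uv_i$ share a color, or one of them has color in $C_U$---and I then double count pairs (copy of $K_{p+1}$ in $G$, obstructing vertex). Because a single color coincidence is a pair of specific edges of $K_n$ and can obstruct only a controlled number of pairs, the total obstruction count cannot cover every $(U,u)$ pair once $N\geq t(n,p)+2$. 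Pinning down the constants sharply enough that $N=t(n,p)+1$ remains consistent while $N=t(n,p)+2$ fails is expected to be the technical crux.

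For the characterization, suppose $c$ achieves $N=t(n,p)+1$ with no rainbow $K_{p+2}$. The Erd\H{o}s--Simonovits stability theorem applied to the representative subgraph $G$ (with $t(n,p)+1$ edges, containing $K_{p+1}$ yet admitting no extendable copy) forces $G$ to be within $o(n^2)$ edges of $T(n,p)$. A local swap argument then upgrades approximate equality to exact equality: any non-Tur\'an edge in $G$ allows either extending some $K_{p+1}$ to a rainbow $K_{p+2}$ (contradicting the hypothesis on $c$) or re-selecting representative edges to liberate an extra color (contradicting $N$-maximality). Hence $G=T(n,p)$ as rainbow subgraphs, and every edge of $K_n\setminus E(T(n,p))$ must carry one common color, matching the claimed extremal coloring exactly.
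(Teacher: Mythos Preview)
The paper does not give a direct proof of Theorem~\ref{antiramsey for Kp}; it is quoted from \cite{erdossim1975}, and the Remark after Theorem~\ref{main1} observes that it is the special case $k=2$ of Theorem~\ref{main1}(\romannumeral2). That proof runs by progressive induction: one applies Erd\H{o}s--Stone to the representing graph $L_n$ to locate a large induced $T(Np,p)$, classifies the remaining vertices according to their attachment pattern to this seed, and shows that any deviation from the target structure either produces a rainbow forbidden subgraph or strictly decreases the potential $\phi$, reducing to a smaller $n$. There is no supersaturation, no stability theorem, and no clique-extension counting.

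Your outline is a genuinely different route, and the gap sits exactly where you flag it. With only one or two edges above $t(n,p)$, neither Erd\H{o}s--Stone nor Tur\'an-type supersaturation produces many copies of $K_{p+1}$ in $G$; those tools require a surplus of order $\epsilon n^2$. So the ``boost'' has no engine, and your double count compares $M(n-p-1)$ obstruction slots, with $M$ bounded, against a supply of obstructions that is not small: a single color class may contain $\Theta(n^2)$ edges (it does in the extremal construction itself), hence $\Theta(n^3)$ monochromatic cherries, each of which obstructs some $(U,u)$. The per-coincidence bound you invoke is fine; the number of coincidences is not.

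The stability step is also misapplied as written. Erd\H{o}s--Simonovits stability concerns $K_{p+1}$-free graphs, and your $G$ contains $K_{p+1}$. You could instead note that $G$ is $K_{p+2}$-free (being rainbow) and invoke stability for $K_{p+2}$, but that compares $G$ to $T(n,p+1)$, and since $e(G)=t(n,p)+1$ is far below $t(n,p+1)$ the hypothesis is vacuous. Passing from ``$G$ has $t(n,p)+1$ edges and no extendable rainbow $K_{p+1}$'' to ``$G$ is structurally $T(n,p)$'' is precisely the hard step; in the paper's argument the large $T(Np,p)$ extracted via Erd\H{o}s--Stone supplies this structure directly, and the progressive induction then pins down the remaining vertices one class at a time.
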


In order to state the main results in this paper, we introduce the following definitions which are of independent interest.

\subsection{Decomposition family and decomposition family sequence of graphs}
For every family of forbidden graphs $\mathcal{L}$, Simonovits \cite{Simonovits1974} defined the decomposition family $\mathcal{M}(\mathcal{L})$ of $\mathcal{L}$.

\begin{definition}
Given a family $\mathcal{L}$ with $p(\mathcal{L})=p$, let $\mathcal{M}:=\mathcal{M}(\mathcal{L})$ be the family of minimal graphs $M$ that satisfy the following: there exist an $L\in \mathcal{L}$ and a $t=t(L)$ such that $L\subset (M\cup \overline{K}_t)\vee T(t,p-1)$. We call $\mathcal{M}$ the {\it decomposition family} of $\mathcal{L}$.
\end{definition}
Thus, a graph $M$ is in $\mathcal{M}$ if the graph obtained from putting a copy of $M$ (but not any of its proper subgraphs) into a class of a large $T(n,p)$ contains some $L\in \mathcal{L}$. If $L\in\mathcal{ L}$ with minimum chromatic number $p+1$, then $L\subset T(t,p+1)$ for some $t=t(L)$, therefore the decomposition family $\mathcal{M}$ always contains some bipartite graphs. But not all the graphs in the decomposition family of a family of graphs must be bipartite, for example, $\mathcal{M}(\{K_{p+2},2 \cdot K_{p+1}\})=\{K_3,M_4\}$ ($p\geq 1$).

A graph on $2k+1$ vertices consisting of $k$ triangles which intersect in exactly one common vertex is called a {\it $k$-fan}. Similarly, a graph on $pk+1$ vertices consisting of $k$ cliques each with $p+1$ vertices, which intersect in exactly one common vertex, is called a {\it $(k,p+1)$-fan.} It is easy to see that the decomposition families of $k$-fan and $(k,p+1)$-fan are both $\{S_{k+1},M_{2k}\}$.

In this paper, we define the decomposition family  sequence of graphs. Let $\mathcal{F}$ be a family of graphs and $\mathcal{M}(\mathcal{F})$ be the decomposition family of $\mathcal{F}$. The {\it decomposition-remainder family} of $\mathcal{F}$ is defined by
\[\mathcal{F}^{\mathcal{M}(\mathcal{F})-}=\{F-E(M):F\in\mathcal{F},M\in\mathcal{M}(\mathcal{F}),M\subseteq F \}.\]


\begin{definition}
Given a family $\mathcal{F}=\mathcal{F}_0$ with $p(\mathcal{F})=p$, we define $\mathcal{F}_1,\ldots,\mathcal{F}_{p}$ by recurrence:
\[\mathcal{F}_{i+1}=\mathcal{F}_{i}^{\mathcal{M}(\mathcal{F}_{i})-}  \mbox{ and }\mathcal{M}_i(\mathcal{F})=\mathcal{M}(\mathcal{F}_i)\]
 for $i=0,1,\ldots,p-1,$ and call $$\mathcal{M}_0(\mathcal{F}),\ldots,\mathcal{M}_p(\mathcal{F})$$ the decomposition family sequence of $\mathcal{F}$.
\end{definition}

Let $\mathcal{L}$ be a family of graphs. The decomposition family of $\mathcal{L}$  does not determine ex$(n,\mathcal{L})$, see \cite{Yuan1} for more information. Hence, it is interesting to ask the following questions.

\begin{question}
Are the extremal graphs for a family of graphs $\mathcal{L}$ determined by its decomposition family sequence?
\end{question}

\begin{question}
Is a family of  graphs determined by its decomposition family sequence?
\end{question}

In the conclusion of this paper, we will discuss some problems on decomposition family sequences of graphs.

\subsection{Main results}
Let $H(n,p,k)=K_{k-1}\vee T(n-k+1,p)$ and $H^\prime(n,p,k)=\overline{K}_{k-1}\vee T(n-k+1,p)$. Denote by $h(n,p,k)$ the size of $H(n,p,k)$ and $h^\prime(n,p,k)$ the size of $H^\prime(n,p,k)$.\\

One of the main results of this paper is the following.

\begin{theorem}\label{main1}
Let $\mathcal{F}$ be a family of graphs with $\mathcal{F}^{-}=\{H-e:H\in \mathcal{F}, e\in E(H)\}$ and $p(\mathcal{F}^{-})=p\geq 2$. Let $\mathcal{M}_0(\mathcal{F}),\ldots,\mathcal{M}_p(\mathcal{F})$ be the decomposition family sequence of $\mathcal{F}$.\\
(\romannumeral1) If $\mathcal{M}_0(\mathcal{F})=\{M_{2k}\}$ ($k\geq2$), then $$\emph{AR}(n,\mathcal{F})=h^\prime(n,p,k-1)+q$$
provided $n$ is sufficiently large, where $q$ is the maximum number of colors such that the coloring of $K_n$, obtained by taking a rainbow $H^\prime(n,p,k-1)$, and coloring all edges in its complement with $q$ (extra) colors so that in each partite set of $T(n-k+2,p)$ we color all edges with the same color, is an $\mathcal{F}$-free coloring.\\
(\romannumeral2) If $\mathcal{M}_0(\mathcal{F})=\{M_2\}$, $\mathcal{M}_1(\mathcal{F})=\{M_{2k-2}\}$ ($k\geq2$), then $$\emph{AR}(n,\mathcal{F})=h(n,p,k-1)+1$$
provided $n$ is sufficiently large.
Moreover, any coloring achieving this bound is obtained by taking a rainbow $H(n,p,k-1)$, and coloring all edges in its complement with one (extra) color.
\end{theorem}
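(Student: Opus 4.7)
Plan of proof. Both parts share the same structure; I focus on (ii), and sketch (i) at the end.

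\emph{Lower bound for (ii).} I construct the claimed colouring directly: on $V(K_n)$, fix a copy of $H(n,p,k-1)=K_{k-1}\vee T(n-k+1,p)$, rainbow-colour its edges (using $h(n,p,k-1)$ distinct colours), and give every edge of the complement --- all of which lie inside a partite set of $T(n-k+1,p)$ --- the same new colour. To prove $\mathcal{F}$-freeness, suppose for contradiction that a rainbow copy of some $F\in\mathcal{F}$ occurs. It uses at most one extra-colour edge $e_0$, so $F\subseteq H(n,p,k-1)+e_0$, i.e.\ $F-e_0\subseteq K_{k-1}\vee T(n-k+1,p)$. A careful case analysis of this embedding, based on how $F-e_0$ splits between the apex and the $p$ partite classes, produces an embedding $F-e_0\subseteq (M\cup\overline{K}_t)\vee T(t,p-1)$ with $|E(M)|\leq k-2$, contradicting the minimality stipulated by $\mathcal{M}_1(\mathcal{F})=\{M_{2k-2}\}$ and giving the required contradiction.

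\emph{Upper bound for (ii) via progressive induction.} Suppose the colouring uses more than $h(n,p,k-1)+1$ colours, and take a representative subgraph $G$ containing one edge of each colour. Every subgraph of $G$ is rainbow under the original colouring, so $G$ is $\mathcal{F}$-free in the ordinary Tur\'an sense and $e(G)\geq h(n,p,k-1)+2$. Theorem~\ref{p+1 chromatic} together with the Simonovits framework applied to $\mathcal{M}_0(\mathcal{F})=\{M_2\}$ gives $\mathrm{ex}(n,\mathcal{F})=h(n,p,k-1)+O(1)$ and the corresponding stability statement; hence, after $o(n^2)$ edge corrections, $G$ is isomorphic to $H(n,p,k-1)$. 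This yields an apex set $A$ of $k-1$ near-universal vertices and a near-Tur\'an partition $V_1\cup\cdots\cup V_p$ of the remaining vertices. The decisive step is colour-counting inside the $V_i$'s: if two distinct colours appeared on edges of $\bigcup_i\binom{V_i}{2}$, then representative edges $e_1,e_2$ of those colours, combined with appropriate rainbow edges of $G$ (including $k-2$ further within-class rainbow edges and enough cross-class rainbow structure), would assemble a rainbow copy of some $F\in\mathcal{F}$ by realising the matching $M_{2k-2}\in\mathcal{M}_1(\mathcal{F})$ needed to embed $F-e\in\mathcal{F}_1$, contradicting $\mathcal{F}$-freeness. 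Hence exactly one extra colour is used, giving the sharp bound $h(n,p,k-1)+1$ and the unique extremal colouring.

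\emph{Main obstacle and part (i).} The delicate step is isolating the apex: standard Erd\H{o}s--Simonovits stability yields only a $p$-partite approximation of $G$, and pinning down the exact $k-1$ apex vertices requires the $\mathcal{M}_0(\mathcal{F})=\{M_2\}$ hypothesis (to exclude ``impostor'' apex vertices hidden inside some $V_i$) together with a second, progressive stability round for the reduced family $\mathcal{F}_1$ with $\mathcal{M}_1(\mathcal{F})=\{M_{2k-2}\}$ controlling the within-class colour budget. Part (i) follows the same scheme with $H^\prime(n,p,k-1)=\overline{K}_{k-1}\vee T(n-k+1,p)$ replacing $H(n,p,k-1)$: the apex is an independent set, the complement contains the clique $K_{k-1}$ on the apex in addition to the within-class edges, and the quantity $q$ is the maximum number of colour classes one can distribute on the complement without realising a rainbow $F\in\mathcal{F}$, under the single-level restriction $\mathcal{M}_0(\mathcal{F})=\{M_{2k}\}$.
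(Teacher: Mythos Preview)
Your upper-bound argument for (ii) has a genuine gap. You assert that ``the Simonovits framework applied to $\mathcal{M}_0(\mathcal{F})=\{M_2\}$ gives $\mathrm{ex}(n,\mathcal{F})=h(n,p,k-1)+O(1)$'', and then invoke stability to force the representing graph $G$ close to $H(n,p,k-1)$. This is false. The hypothesis $\mathcal{M}_0(\mathcal{F})=\{M_2\}$ (together with $p(\mathcal{F}^-)=p$) says precisely that $\mathcal{F}$ contains an edge-critical graph of chromatic number $p+2$; by Simonovits's critical-edge theorem the Tur\'an extremal graph for $\mathcal{F}$ is $T(n,p+1)$, so $\mathrm{ex}(n,\mathcal{F})=t(n,p+1)$. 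This exceeds $h(n,p,k-1)$ by $\Theta(n^2)$, and the parameter $k$ does not even appear in $\mathcal{M}_0(\mathcal{F})$. Consequently, knowing only that $G$ is $\mathcal{F}$-free in the ordinary Tur\'an sense gives you no structural information at the $h(n,p,k-1)$ scale: $G$ could for instance be $T(n,p+1)$ itself, which looks nothing like $K_{k-2}\vee T(n-k+2,p)$.

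The paper closes this gap by exploiting the colouring, not just the single representing graph. Its Proposition~3.1 shows that if \emph{some} representing graph $L_n$ contains $T(2vp,p;M_{2k})$, then by swapping one representative edge for an edge of the same colour lying inside another partite class one realises $T(vp,p;M_{2k-2},M_2)$ in a (new) representing graph, which contains a rainbow $F$. Thus every representing graph is forbidden from containing $T(2vp,p;M_{2k})$, a much stronger constraint than $\mathcal{F}$-freeness. This is what allows the Erd\H{o}s--Stone step to be run at the $T(n,p)$ level (not $T(n,p+1)$), and what drives the subsequent vertex-classification argument that isolates an apex set $E$ of size at most $k-2$ and the $p$ classes $C_1,\dots,C_p$. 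Your ``decisive colour-counting step'' at the end is morally the same idea, but it must be deployed \emph{before} and \emph{during} the structural analysis, not only after the partition is already in hand. (Two smaller points: by the paper's convention $H(n,p,k-1)=K_{k-2}\vee T(n-k+2,p)$, so your apex has size $k-2$, not $k-1$; and in your lower-bound sketch the relevant obstruction is that $K_{k-2}$ has only $k-2$ \emph{vertices} and hence cannot contain $M_{2k-2}$, not that it has at most $k-2$ edges.)
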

\remark  Clearly Theorem~\ref{main1} (\romannumeral2) generalized   Theorem~\ref{antiramsey for Kp}. 
 The value $q$ in Theorem~\ref{main1} (\romannumeral1) depends on the structures of the graphs in $\mathcal{F}$. For example, if $\mathcal{F}$ contains the only graph obtained by taking a Tur\'{a}n graph $T(n,p-1)$ with $n\geq 6(p-1)$ and putting a $2\cdot K_3$ in one partite set of it, then $q=1$. If $\mathcal{F}$ contains the only graph obtained by taking a Tur\'{a}n graph $T(n,p)$ with $n\geq 4p$ and putting an $M_4$ in one partite set of it, then $q=p$.
\begin{corollary}
 Let $\mathcal{F}=\{k\cdot K_{p+1}\}$, $p\geq 2$, $k\geq 2$. Then $$\emph{AR}(n,\mathcal{F})=h^\prime(n,p,k-1)+{ k-2 \choose 2}+1$$
provided $n$ is sufficiently large.
Moreover, any coloring achieving this bound is obtained by taking a rainbow $H(n,p,k-1)$, and coloring all edges in its complement with one (extra) color.
\end{corollary}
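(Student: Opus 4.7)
The plan is to derive this corollary from Theorem~\ref{main1}(i). I first verify the hypotheses for $\mathcal{F}=\{k\cdot K_{p+1}\}$: every $H-e$ with $H\in\mathcal{F}$ is $(k-1)\cdot K_{p+1}\cup(K_{p+1}-e)$, which has chromatic number $p+1$, so $p(\mathcal{F}^{-})=p\geq 2$. To identify $\mathcal{M}_0(\mathcal{F})$, I observe that in any embedding of $k\cdot K_{p+1}$ into $(M\cup\overline{K}_t)\vee T(t,p-1)$, a vertex of $\overline{K}_t$ has no neighbors inside the large side, so placing such a vertex in a copy of $K_{p+1}$ would force the other $p$ vertices into $T(t,p-1)$, contradicting that $T(t,p-1)$ is $K_p$-free. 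Hence each $K_{p+1}$ places at most one vertex in each of the $p-1$ Tur\'an parts and at least two vertices in $V(M)$ forming an $M$-edge; vertex-disjointness of the $k$ cliques then forces $k$ disjoint $M$-edges, and minimality gives $\mathcal{M}_0(\mathcal{F})=\{M_{2k}\}$. Theorem~\ref{main1}(i) accordingly yields $\emph{AR}(n,\mathcal{F})=h'(n,p,k-1)+q$ for all large $n$.

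The main step is to evaluate $q$; I claim $q={k-2 \choose 2}+1$, realized by the coloring that assigns pairwise distinct fresh colors to the ${k-2 \choose 2}$ edges inside $\overline{K}_{k-2}$ and one additional shared color to all within-Tur\'an-part edges. To verify $\mathcal{F}$-freeness, call a $K_{p+1}$ of type-$C$ if it is vertex-disjoint from $\overline{K}_{k-2}$, and of type-$A$ otherwise. A type-$C$ clique must use at least one within-Tur\'an-part edge (since $T(n-k+2,p)$ is $K_{p+1}$-free), and all such edges share a single color, so any rainbow $k\cdot K_{p+1}$ contains at most one type-$C$ clique; the remaining $\geq k-1$ cliques each consume at least one of the $k-2$ available vertices of $\overline{K}_{k-2}$, which is impossible. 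For the matching upper bound, suppose two distinct Tur\'an parts $P_i,P_j$ carry different within-part colors; pick complement edges $e_i\in P_i$ and $e_j\in P_j$, build $k-2$ type-$A$ cliques by coning each vertex of $\overline{K}_{k-2}$ over a transversal of $T(n-k+2,p)$, and build two type-$C$ cliques anchored on $e_i$ and $e_j$ together with transversals of the remaining parts. For $n$ large the choices can be made pairwise vertex-disjoint, and since $e_i,e_j$ bear distinct fresh colors while every other edge is a rainbow edge of $H'$, the resulting $k\cdot K_{p+1}$ is rainbow, contradicting $\mathcal{F}$-freeness.

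Using $h(n,p,k-1)=h'(n,p,k-1)+{k-2 \choose 2}$, the formula rewrites as $h(n,p,k-1)+1$. For the uniqueness clause, attaining $q={k-2 \choose 2}+1$ forces exactly one common color on all within-Tur\'an edges and ${k-2 \choose 2}$ pairwise distinct fresh colors on the edges of $\overline{K}_{k-2}$; these edges then promote the rainbow $H'(n,p,k-1)$ to a rainbow $H(n,p,k-1)$, while the remaining within-Tur\'an edges form the single monochromatic class, exactly matching the announced extremal configuration. The principal obstacle is the upper bound on $q$: one must produce $k-2$ type-$A$ and $2$ type-$C$ cliques simultaneously vertex-disjoint while keeping the full coloring rainbow, which is routine for large $n$ but demands separate inspection for the small cases $k=2,3$ (where $0$ or $1$ type-$A$ cliques appear), the former recovering the known $\emph{AR}(n,2K_{p+1})=t(n,p)+1$ as a consistency check.
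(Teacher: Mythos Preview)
Your proof is correct and is exactly the derivation from Theorem~\ref{main1}(i) that the paper leaves to the reader: verify $\mathcal{M}_0(\{k\cdot K_{p+1}\})=\{M_{2k}\}$, then compute $q=\binom{k-2}{2}+1$ by the two-sided argument you give. One small point: the uniqueness clause in the corollary requires knowing that \emph{every} extremal coloring has the form ``rainbow $H'(n,p,k-1)$ plus monochromatic Tur\'an parts,'' which is established in the \emph{proof} of Theorem~\ref{main1}(i) rather than its statement; you should cite that explicitly rather than only arguing uniqueness among colorings already of that form.
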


\begin{corollary}\label{corollary1}
 Let $\mathbb{P}_{10}$ be the Petersen graph. Then $$\emph{AR}(n,\{\mathbb{P}_{10}\})=\left\lfloor\frac{n-1}{2}\right\rfloor
 \left\lceil\frac{n-1}{2}\right\rceil+n+1$$
provided $n$ is sufficiently large.
Moreover, any coloring achieving this bound is obtained by taking a rainbow $H(n,2,2)$ and coloring all edges in its complement with $2$ (extra) colors so that in each component of $\overline{H}(n,2,2)$ we color all edges with the same color.
\end{corollary}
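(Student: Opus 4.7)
The plan is to apply Theorem~\ref{main1}(i) to $\mathcal{F}=\{\mathbb{P}_{10}\}$; the remaining work is to identify the parameters and compute the quantity $q$. Since $\mathbb{P}_{10}$ is edge-transitive and $\mathbb{P}_{10}-e$ still contains a $5$-cycle for every edge $e$, we have $p(\{\mathbb{P}_{10}\}^-)=p(\{\mathbb{P}_{10}\})=2$. A graph $M$ lies in $\mathcal{M}_0(\{\mathbb{P}_{10}\})$ precisely when $V(\mathbb{P}_{10})$ admits a bipartition $I\cup R$ with $I$ independent and $\mathbb{P}_{10}[R]\subseteq M$, with $M$ minimal. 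The maximum independent set of $\mathbb{P}_{10}$ has size $4$ (a ``book'' in the Kneser model $K(5,2)$); for any such $I$, the $3$-regularity of $\mathbb{P}_{10}$ forces $e(R)=15-4\cdot 3=3$, and a direct check shows that $\mathbb{P}_{10}[R]$ is a perfect matching on six vertices, i.e.\ $M_6$. Any smaller choice of $I$ strictly increases $e(R)$, and no graph with fewer than three edges can suffice, so $\mathcal{M}_0(\{\mathbb{P}_{10}\})=\{M_6\}$, i.e.\ $k=3$ in the notation of Theorem~\ref{main1}(i). Since $\overline{K}_1=K_1$, we have $H'(n,2,2)=H(n,2,2)$, and the theorem reduces the corollary to showing $q=2$.

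The upper bound $q\leq 2$ is automatic: $\overline{H'(n,2,2)}$ is the disjoint union of an isolated vertex and the two cliques on the partite sets of $T(n-1,2)$, so the constraint of one color per partite set allows at most two extra colors. For the lower bound, let $\chi$ be the prescribed coloring (rainbow $H(n,2,2)$ with centre $v$, and extra colors $c_A,c_B$ on the $A$-clique and $B$-clique), and suppose $\chi$ contains a rainbow copy of $\mathbb{P}_{10}$ via an embedding $\pi$. If $v\notin\mathrm{Im}(\pi)$, then all ten Petersen vertices lie in $A\cup B$, and rainbowness forces at most one edge inside each of $A$ and $B$, giving a bipartition of $\mathbb{P}_{10}$ with at most two side-edges; this contradicts the standard fact that the edge-bipartization of $\mathbb{P}_{10}$ equals $3$ (equivalently, the maximum cut is $12$). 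Hence $v=\pi(w)$ for some $w\in V(\mathbb{P}_{10})$, and the remaining nine Petersen vertices induce a bipartition $(X,Y)$ of $\mathbb{P}_{10}-w$ with $e(X),e(Y)\leq 1$.

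The main obstacle is ruling out this sub-case, because $\mathbb{P}_{10}-w$ has edge-bipartization only $2$, so a mere count is no longer sufficient. My plan is to use the Kneser tripartition: fixing $w=\{1,2\}$, the set $V(\mathbb{P}_{10})\setminus\{w\}$ decomposes as $A\cup B\cup N$ where $A$, $B$, $N$ collect the pairs containing $1$, containing $2$, and disjoint from $\{1,2\}$ respectively. A direct enumeration yields exactly six $5$-cycles of $\mathbb{P}_{10}-w$, each containing one $N$-vertex, and an edge-cycle incidence count shows that each $A$–$B$ edge lies in three of these cycles while each $A$–$N$ or $B$–$N$ edge lies in only two. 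Consequently any pair of edges whose removal makes $\mathbb{P}_{10}-w$ bipartite must consist of two $A$–$B$ edges whose cycle coverages partition the six cycles, and a case-check leaves exactly three such pairs. For each of them the resulting bipartite graph is connected, so its bipartition is unique, and on inspection the two removed edges lie on the \emph{same} side of that bipartition, producing a split $(e(X),e(Y))=(2,0)$ rather than $(1,1)$. Hence no bipartition of $\mathbb{P}_{10}-w$ with $e(X),e(Y)\leq 1$ exists, no rainbow $\mathbb{P}_{10}$ occurs in $\chi$, and $q=2$. The arithmetic $h(n,2,2)+2=\lfloor(n-1)/2\rfloor\lceil(n-1)/2\rceil+n+1$ completes the evaluation, and the uniqueness of the extremal coloring is inherited from the structural description of extremal colorings established inside the proof of Theorem~\ref{main1}(i).
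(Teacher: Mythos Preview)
Your proof is correct and follows the same route as the paper: apply Theorem~\ref{main1}(i) with $p=2$, $k=3$, and $q=2$. The paper's own proof is a single line that merely asserts $\mathcal{M}_0(\mathbb{P}_{10})=\{M_6\}$ and $q=2$; your argument supplies the details the paper omits, in particular the careful verification (via the Kneser model and the $5$-cycle covering analysis) that no bipartition of $\mathbb{P}_{10}-w$ has at most one edge on each side, which is exactly what is needed to certify $q=2$.
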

\begin{proof}
Since $\mathcal{M}_0(\mathbb{P}_{10})=\{M_6\}$, the corollary follows from Theorem~\ref{main1} (\romannumeral1) with $q=2$.
\end{proof}

A {\it nearly $(k-1)$-regular} graph is a graph such that any vertex of it has degree $k-1$ except one vertex with degree $k-2$. The following proposition was proved in \cite{Simonovits1968}.

\begin{proposition}\cite{Simonovits1968}\label{k-1 regular}
Let $m$ be a large constant. Then there exists a $(k-1)$-regular triangle-free graph or a nearly $(k-1)$-regular triangle-free graph on $m$ vertices.
\end{proposition}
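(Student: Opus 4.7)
The plan is to produce explicit constructions according to the parities of $m$ and $k-1$. A $(k-1)$-regular graph on $m$ vertices exists only when $m(k-1)$ is even, so the ``nearly'' option in the proposition is needed precisely when $m$ and $k-1$ are both odd; I would treat this as the genuine exceptional case and handle everything else by a direct bipartite or circulant construction.

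If $m=2s$ is even, take two disjoint $s$-sets $A=\{a_0,\dots,a_{s-1}\}$ and $B=\{b_0,\dots,b_{s-1}\}$ and join $a_i$ to $b_{i+j \pmod s}$ for $j=0,1,\dots,k-2$. As long as $s\ge k-1$ this yields a $(k-1)$-regular bipartite graph, hence triangle-free. If $m$ is odd and $k-1=2t$ is even, I would use the circulant graph $C_m(S\cup(-S))$ on $\mathbb{Z}_m$ with $S=\{\lfloor m/3\rfloor+1,\dots,\lfloor m/3\rfloor+t\}$. Once $m\ge 6t$, a direct check shows that $S\cup(-S)$ lies in the middle third of $\mathbb{Z}_m$ while its sumset lies entirely in the complementary two-thirds, so $S\cup(-S)$ is sum-free modulo $m$; the circulant is then $(k-1)$-regular and triangle-free.

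The remaining case is $m$ and $k-1$ both odd. Apply the previous paragraph with degree parameter $k-2$ (which is now even) to produce a $(k-2)$-regular triangle-free graph $G$ on $m$ vertices. Let $G^{\ast}$ be the auxiliary graph on $V(G)$ joining $u$ and $v$ whenever $uv\notin E(G)$ and $u,v$ have no common neighbour in $G$. Each vertex of $G$ has $k-2$ neighbours and at most $(k-2)(k-3)$ vertices at distance exactly $2$, so
\[\delta(G^{\ast})\ge m-1-(k-2)^{2},\]
which exceeds $m/2$ for $m$ large compared to the fixed parameter $k$. By Dirac's theorem $G^{\ast}$ contains a Hamilton cycle, from which I extract a matching $M$ of size $(m-1)/2$ covering all but one vertex. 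Then $G+M$ is nearly $(k-1)$-regular, and it is triangle-free because $M\subseteq E(G^{\ast})$ prohibits any new triangle through an edge of $M$.

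The most delicate step is the third case: one must ensure the auxiliary graph $G^{\ast}$ is dense enough for Dirac, which is exactly why the hypothesis requires $m$ to be large relative to $k$. The sum-free verification in the second case is routine counting in $\mathbb{Z}_m$ and is the only place where the parity of $k-1$ matters for the regular construction.
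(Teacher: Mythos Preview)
The paper does not supply its own proof of this proposition; it merely quotes it as a known result from Simonovits's 1968 paper and uses it as a black box. So there is nothing to compare your argument against in this paper.

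Your construction is correct and self-contained. The bipartite case for even $m$ and the circulant case for odd $m$ with $k-1$ even are standard; your placement $S=\{\lfloor m/3\rfloor+1,\dots,\lfloor m/3\rfloor+t\}$ does land $S\cup(-S)$ strictly inside the open middle third $(m/3,2m/3)$ of $\mathbb{Z}_m$, which is a sum-free set, so the circulant is triangle-free and $(k-1)$-regular once $m\ge 6t$ (so that $S$ and $-S$ are disjoint). In the remaining case your Dirac argument on the auxiliary graph $G^\ast$ is clean: since $M$ is a matching, any triangle in $G+M$ could use at most one edge of $M$, and that edge was chosen precisely to avoid common neighbours in $G$. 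The only point worth stating explicitly is that the degree bound in $G^\ast$ should read $m-1-(k-2)-(k-2)(k-3)=m-1-(k-2)^2$, i.e., you must subtract the $k-2$ neighbours of $u$ as well as the distance-$2$ vertices; you have written the final expression correctly but skipped the neighbour term in the intermediate accounting.
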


Denote by $\mathcal{U}_{n,k} $ the class of $(k-1)$-regular graphs, or nearly $(k-1)$-regular graphs on $n$ vertices. Let $\mathcal{U}_{n,k}^{\prime}$ be the class of $(k-1)$-regular triangle-free graphs, or nearly $(k-1)$-regular triangle-free graphs on $n$ vertices. By Proposition~\ref{k-1 regular}, $\mathcal{U}_{n,k}$ and $\mathcal{U}_{n,k}^{\prime}$ are nonempty for each $k$ and large $n$.

Let $n=\sum^{p}_{i=1}n^\prime_i$ with $n^\prime_1\geq \ldots \geq n^\prime_p$ and $n^\prime_1-n^\prime_p\leq 1$. Denote by $T(n,p;\mathcal{U}_{n,k})$ ($T(n,p;\mathcal{U}^\prime_{n,k})$ resp.) the class of graphs obtained from the Tur\'{a}n graph $T(n,p)$ by adding a graph of $\mathcal{U}_{n^\prime_i,k}$ ($\mathcal{U}_{n^\prime_i,k}^{\prime}$ resp.) into the $i$-th class of it for $i=1,\ldots,p$. Let $n=\sum^{p}_{i=1}n_i$. Similarly, we can define $K_{n_1,n_2,\ldots,n_p}(n;\mathcal{U}_{n,k})$ and $K_{n_1,n_2,\ldots,n_p}(n;\mathcal{U}^\prime_{n,k})$. Denote by $T(n,p;\mathcal{U}_{\lceil\frac{n}{p}\rceil,k})$ the class of graphs obtained from the Tur\'{a}n graph $T(n,p)$ by adding a graph from $\mathcal{U}_{\lceil\frac{n}{p}\rceil,k}$ into the largest class of it. Similarly, we can define $K_{n_1,n_2,\ldots,n_p}(n;\mathcal{U}_{n_1,k})$.



The following colorings of graphs are the extremal colorings of the next theorem. Let $n_1\geq \ldots \geq n_p$. Let $\mathcal{C}(n,k,p)$ ($\mathcal{C}^\prime(n,k,p)$ resp,) be the set of colorings of $K_n$  obtained by taking a rainbow coloring of a graph in $T(n,p;\mathcal{U}_{n,k})$ ($T(n,p;\mathcal{U}^\prime_{n,k})$ resp,) or of a graph in $K_{n_1,n_2,\ldots,n_p}(n;\mathcal{U}_{n,k})$ ($K_{n_1,n_2,\ldots,n_p}(n;\mathcal{U}^\prime_{n,k})$ resp,) with $n_1-n_p=2$, and $n_1, n_p$ are even when $k$ is even, respectively, and coloring all the other edges in its partite sets with $p$ (extra) colors so that in each partite set we color all edges with the same color.

Let $Q(p,k)=K_1\vee T(pk,p)$. The other main result is stated as follows. 

\begin{theorem}\label{main2}
Let $\mathcal{F}$ be a family of graphs with $\mathcal{F}^{-}=\{H-e:H\in \mathcal{F}, e\in E(H)\}$ and $p(\mathcal{F}^{-})=p\geq 2$. Let $\mathcal{M}_0(\mathcal{F}),\ldots,\mathcal{M}_p(\mathcal{F})$ be the decomposition family sequence of $\mathcal{F}$.\\
(\romannumeral1) Let $k\geq 2$. If $\mathcal{M}_0(\mathcal{F})=\{S_{k+1}\}$ and any graph in $\mathcal{F}_0$ contains $Q(p,k)$ as a subgraph, then $$\emph{AR}(n,\mathcal{F})=t(n,p)+\left\lfloor\frac{(k-2)\lceil\frac{n}{p}\rceil}{2}\right\rfloor+\ldots+\left\lfloor\frac{(k-2)\lfloor\frac{n}{p}\rfloor}{2}\right\rfloor+p$$
provided $n$ is sufficiently large. Furthermore, the colorings in $\mathcal{C}^\prime(n,k-1,p)$ are extremal colorings, and all extremal colorings are in $\mathcal{C}(n,k-1,p)$.\\
(\romannumeral2) Let $k\geq 3$. If $\mathcal{M}_0(\mathcal{F})=\{M_2\}$, $\mathcal{M}_1(\mathcal{F})=\{S_{k+1}\}$ and any graph in $\mathcal{F}_1$ contains $Q(p,k)$ as a subgraph, then
$$\emph{AR}(n,\mathcal{F})=t(n,p)+\left\lfloor\frac{(k-2)\lceil\frac{n}{p}\rceil}{2}\right\rfloor+\ldots+\left\lfloor\frac{(k-2)\lfloor\frac{n}{p}\rfloor}{2}\right\rfloor+p .$$
provided $n$ is sufficiently large. Furthermore, the coloring in $\mathcal{C}^\prime(n,k-1,p)$ are extremal colorings, and all extremal colorings are in $\mathcal{C}(n,k-1,p)$ and except the extremal colorings which is obtained by taking a rainbow graph from  $T(n,2;\mathcal{U}_{n/2,3})$ and coloring all edges in its complement with one (extra) color, when $k=3$, $p=2$ and $n=2$ mod $4$.
\end{theorem}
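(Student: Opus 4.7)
I would exhibit a coloring in $\mathcal{C}'(n,k-1,p)$: rainbow-color a Tur\'{a}n graph $T(n,p)$ together with a $(k-2)$-regular triangle-free graph from $\mathcal{U}'_{n_i,k-1}$ inside each class $V_i$, and paint the remaining within-class edges using $p$ extra monochromatic colors, one per class. The color count is exactly $t(n,p)+\sum_{i}\lfloor(k-2)n_i/2\rfloor+p$. Since every $F\in\mathcal{F}_0$ contains $Q(p,k)$, it suffices to verify that no rainbow $Q(p,k)$ exists. Any rainbow subgraph uses at most one edge of each extra color, so within each class its maximum degree is at most $k-1$; pigeonhole on the $pk+1$ vertices of $Q(p,k)$ places at least $k+1$ of them inside one class $V_a$, and a short case analysis on whether the apex $v^{*}$ lies in $V_a$ and on how the $k+1$ vertices distribute among the base parts $U_1,\ldots,U_p$ produces a vertex of in-class degree at least $k$, a contradiction. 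For Part~(ii) the same construction works, since a rainbow $F\in\mathcal{F}_0$ minus any single edge is a rainbow $F'\in\mathcal{F}_1$ still containing $Q(p,k)$; the additional exceptional coloring in the parity case $k=3$, $p=2$, $n\equiv 2\pmod{4}$ is verified $\mathcal{F}$-free by an analogous direct check.

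\textbf{Upper bound.} Assume $c$ is $\mathcal{F}$-free with $N>t(n,p)+\sum_{i}\lfloor(k-2)n_i/2\rfloor+p$ colors, and pick a representing subgraph $G\subseteq K_n$ with $|E(G)|=N$. Erd\H{o}s--Simonovits stability (with $p(\mathcal{F}^-)=p$) places $G$ within $o(n^2)$ edges of some Tur\'{a}n graph $T(n,p)$; fix the induced partition $V_1,\ldots,V_p$. The key use of $\mathcal{M}_0(\mathcal{F})=\{S_{k+1}\}$ (respectively $\mathcal{M}_1(\mathcal{F})=\{S_{k+1}\}$ in Part~(ii), where one of the extra colors plays the role of the deleted $M_2$-edge) is the following: whenever a vertex $v\in V_i$ sees $k$ distinct colors on its within-class edges, one obtains a rainbow $S_{k+1}$ at $v$, which, since the crossing part of $G$ is essentially Tur\'{a}n, may be extended by a rainbow $T(t,p-1)$ across the other $p-1$ classes to a rainbow copy of some $F\in\mathcal{F}_0$ via the inclusion $F\subseteq(S_{k+1}\cup\overline{K}_{t})\vee T(t,p-1)$, contradicting $\mathcal{F}$-freeness. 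Hence every $v\in V_i$ has at most $k-1$ distinct colors on its within-class edges, and a careful counting of the associated color-degrees yields the bound $|E(G[V_i])|\le\lfloor(k-2)|V_i|/2\rfloor+1$ per class; combined with at most $t(n,p)$ crossing edges in $G$, this pins $N$ to the stated bound and forces exactly one extra color per class.

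\textbf{Extremal characterization and main obstacle.} When equality holds, the count also forces the rainbow within-class subgraph of $G$ to be $(k-2)$-regular and, by a similar argument (a within-class triangle would combine with the rainbow crossings to yield a rainbow $F$), triangle-free; this identifies $c$ with an element of $\mathcal{C}(n,k-1,p)$ and completes Part~(i). In Part~(ii) the additional $M_2$ in the decomposition sequence provides one further within-class edge of slack, and the same progressive-induction argument still forces $c\in\mathcal{C}(n,k-1,p)$ except in the parity case $k=3$, $p=2$, $n\equiv 2\pmod{4}$: a class of odd size $n/2$ then admits no perfect matching, so the triangle-free near-matching from $\mathcal{U}'_{n/2,2}$ can be replaced by a $2$-regular cycle from $\mathcal{U}_{n/2,3}$ (possibly containing triangles) while collapsing two extra colors into one, producing the exceptional coloring based on $T(n,2;\mathcal{U}_{n/2,3})$. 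The main obstacle throughout is this rigidity step: showing that the extremal within-class rainbow subgraph must be $(k-2)$-regular and triangle-free (outside the exception), with exactly one extra color per class, rather than any other structure with the right total edge count, requires a delicate progressive-induction argument that carefully combines within-class irregularities or triangles with the rainbow crossings to embed a rainbow $F\in\mathcal{F}$ via the decomposition-family inclusion.
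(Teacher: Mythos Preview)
Your overall architecture is right, but both halves have genuine gaps that the paper has to work hard to fill.

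\textbf{Lower bound.} The pigeonhole-plus-degree argument does not go through. With $k+1$ vertices of $Q(p,k)$ landing in a class $V_a$ and the apex outside $V_a$, consider the distribution $(a_1,a_2)=(k-1,2)$ among the base parts: every vertex then has in-class degree at most $k-1$, so no contradiction arises from degree alone. What is needed is the full structure of the in-class rainbow graph: it is a $(k-2)$-regular \emph{triangle-free} graph plus a single extra edge $x_iy_i$, so that (paper's Observation~1) any $k-1$ vertices of $V_i$ have at most one common neighbour there. The paper's Lemma~2.5 exploits this via an induction on $p$, repeatedly showing that if a rainbow $Q(p,k)$ sat inside the coloring then already some $Q(t,k)$ with $t<p$ would sit inside $t$ of the classes; the inductive step requires two claims and a two-case analysis and is not short.

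\textbf{Upper bound.} The step ``every vertex sees at most $k-1$ within-class colours, hence $|E(G[V_i])|\le\lfloor(k-2)|V_i|/2\rfloor+1$'' is where the real difficulty lies: a bound of $k-1$ on the colour-degree yields only $(k-1)|V_i|/2$, which is too weak by a linear term. The paper closes this gap with a dichotomy you have not supplied. If some class $E_i$ contains a rainbow $S_{k+1}$ in the representing graph, then Proposition~3.2(i) (a rainbow $T(vp,p;S_{k+1},M_2)$ forces a rainbow $F$) kills essentially all within-class edges in every other class, contradicting the edge count unless $p=2,\ k=3$. If no class contains a rainbow $S_{k+1}$, one must still rule out two vertices of in-class degree $k-1$; the paper does this by \emph{swapping} a representing edge for the colour $c_n(xy)$ of the edge between two such centres $x,y$, thereby creating an $S_{k+1}$ and reducing to the previous case. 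This representative-swapping trick, together with the auxiliary forbidden configurations in Proposition~3.2(ii),(iii) (two edge-disjoint stars not forming $K_{2,k}$ or $S_{2k+1}$, and a large $Q(p,m)$), is what drives the degree down from $k-1$ to $k-2$ and is absent from your sketch. Finally, the global framework in the paper is Simonovits' progressive induction rather than a one-shot stability application: one needs the inequality $\phi(c_n)<\phi(c_{n'})$ for some $n/2<n'<n$ to handle the exceptional set $D$ and the possible imbalance of the parts, and your outline does not indicate how the required $e_L-e_{L'}$ estimate is obtained.
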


\remark If $\mathcal{F}$ is a family of graphs with $p(\mathcal{F}^-)= p$ and there exist a graph $H\in \mathcal{F}$ and two edges $e_1,e_2\in E(H)$ such that $\chi(H-e_1-e_2)=p$. Jiang and Pikhurko \cite{Jiang2009} determined the anti-Ramsey number of $\mathcal{F}$. Theorems~\ref{main1} and \ref{main2} generalize their results in a certain sense.

The rest of this paper is organised as follows: In Section 2.1, lemma of progressive induction is presented.
In Section 2.2, serval lemmas are presented. In Section 3,  the proof of the main theorems are given. In Section 4,  some problems on  decomposition family sequence of graphs are discussed.

\section{Several lemmas}
\subsection{Lemma of progressive induction.}
In 1960s, Simonovits \cite{Simonovits1968} introduced the so-called {\it progressive induction} which is similar to the mathematical induction and Euclidean algorithm and combined from them in a certain sense. The progressive induction method is key 
powerful for extremal problems of non-bipartite graphs, for example see \cite{YuanJGT}.
\begin{lemma}\cite{Simonovits1968}\label{progrssion induction}
Let $\mathfrak{U}=\cup_{1}^{\infty}\mathfrak{U}_n$ be a set of given elements such that $\mathfrak{U}_n$ are disjoint subsets of $\mathfrak{U}$. Let $B$ be a condition or property defined on $\mathfrak{U}$ (i.e. the elements of $\mathfrak{U}$ may satisfy or not satisfy $B$). Let $\phi$ be a function from $\mathfrak{U}$ to non-negative integers and\\
(a) if $a\in \mathfrak{U}$ satisfies $B$, then $\phi(a)=0$.\\
(b) there is an $M_0$ such that if $n>M_0$ and $a\in \mathfrak{U}_n$ then either $a$ satisfies $B$ or there exist an $n^{\prime}$ and an $a^{\prime}$ such that
\[\frac{n}{2}<n^{\prime}<n, a^{\prime}\in \mathfrak{U}_{n^{\prime}} \mbox{ and } \phi(a)<\phi(a^{\prime}).\]
Then there exists an $n_0$ such that if $n>n_0$, from $a\in \mathfrak{U}_n$ follows  that $a$ satisfies $B$.
\end{lemma}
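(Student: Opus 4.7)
The plan is to argue by contradiction and build, from any counterexample, a chain of counterexamples whose $\phi$-values strictly grow while the index parameter $n$ halves at worst. Coupling the resulting lower bound on the chain length with a uniform upper bound on $\phi$ over the "base" stratum yields the desired $n_0$.

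First I would record the contrapositive of (a): if $\phi(a) \geq 1$, then $a$ does not satisfy $B$. Now suppose $a_0 \in \mathfrak{U}_n$ is an element with $n > M_0$ that fails $B$. By (b), there exist $n_1$ and $a_1 \in \mathfrak{U}_{n_1}$ with $n/2 < n_1 < n$ and $\phi(a_1) > \phi(a_0) \geq 0$. Since $\phi$ is integer-valued, $\phi(a_1) \geq 1$, so by the contrapositive $a_1$ also fails $B$; hence if $n_1 > M_0$ then (b) applies again. Iterating produces a finite sequence $a_0, a_1, \ldots, a_k$ with
\[
\frac{n_i}{2} < n_{i+1} < n_i, \qquad \phi(a_{i+1}) > \phi(a_i),
\]
terminating at the first index $k$ for which $n_k \leq M_0$; such a $k$ must exist because $n_0, n_1, \ldots$ is strictly decreasing in positive integers.

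Two bounds then collide. The halving inequality iterates to $n < 2^{k} n_k \leq 2^{k} M_0$, so $k > \log_2(n/M_0)$. On the other hand, strict integer monotonicity gives $\phi(a_k) \geq \phi(a_0) + k \geq k$, and $a_k$ belongs to $\mathfrak{U}_{n_k}$ with $n_k \leq M_0$. In the intended setting, each $\mathfrak{U}_m$ is finite (the elements being, e.g., edge-colored graphs on $m$ vertices), so the quantity
\[
K := \max\bigl\{\phi(b) : b \in \mathfrak{U}_m,\ m \leq M_0\bigr\}
\]
is a finite constant depending only on $M_0$. Combining, $k \leq K$ and $k > \log_2(n/M_0)$ force $n < M_0 \cdot 2^{K}$. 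Choosing $n_0 := M_0 \cdot 2^{K}$ therefore rules out any such counterexample for $n > n_0$.

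The main obstacle is not an estimate but a small piece of bookkeeping: one must verify that each $a_i$ produced along the chain still fails $B$, which is precisely where the contrapositive of (a) combined with the integrality of $\phi$ is essential (the dichotomy in (b) could otherwise stall the induction at an element satisfying $B$). The geometric decay $n_{i+1} > n_i/2$ is what forces the chain to be long, and this length is exactly what contradicts the bounded range of $\phi$ on the base stratum $m \leq M_0$.
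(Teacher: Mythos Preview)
The paper does not supply its own proof of this lemma: it is quoted verbatim from Simonovits~\cite{Simonovits1968} and used as a black box, so there is no in-paper argument to compare against. Your proof is the standard descent argument for progressive induction and is correct as written.

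One point worth making explicit: you rightly note that the argument requires $K := \max\{\phi(b) : b \in \mathfrak{U}_m,\ m \leq M_0\}$ to be finite, and you justify this by appealing to the intended application (finitely many edge-colorings on a fixed vertex set). This is not a defect in your proof but in the lemma statement as reproduced in the paper: without some such boundedness hypothesis the lemma is actually false (take $\mathfrak{U}_n = \{(n,j) : j \in \mathbb{N}\}$, $\phi(n,j) = j$, $B$ never satisfied, $M_0 = 2$). In every use of progressive induction in the paper the sets $\mathfrak{U}_n$ are finite, so your added hypothesis is both necessary and harmless.
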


\remark In our problems, $\mathfrak{U}_n$ is the set of extremal $\mathcal{F}$-free colorings of $K_n$, $B$ is a property that the coloring of $K_n$ belongs to the coloring sets described in Theorems~\ref{main1} or \ref{main2}.

\subsection{Other lemmas}
The following two lemmas are proved in \cite{Simonovits1968}.
\begin{lemma}\label{lemma1}\cite{Simonovits1968}
Let $G_n$ be a graph on $n$ vertices. If $\chi(G_n)=p$ and $A_1,\ldots,A_p$ are the sets of vertices having the $i$-th color at a fixed coloring of $V(G_n)$ with $p$ colors and $m_i$ is the number of vertices of the $i$-th class of $T(n,p)$ (i.e. $m_i=\lceil n/p\rceil$ or $m_i=\lfloor n/p\rfloor$ and $\sum_{i=1}^{p}m_i=n$), furthermore $|A_i|=m_i+s_i$, then $$e(G_n)\leq t(n,p)-\sum_{i=1}^{p}{|s_i| \choose 2}.$$
\end{lemma}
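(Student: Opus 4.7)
The plan is to use the obvious observation that a $p$-chromatic graph with color classes $A_1,\ldots,A_p$ is a subgraph of the complete $p$-partite graph $K_{|A_1|,\ldots,|A_p|}$, and then to reduce the claim to a purely numerical inequality about $\binom{m_i+s_i}{2}$ versus $\binom{m_i}{2}+\binom{|s_i|}{2}$. Concretely, each $A_i$ is independent, so
\[
e(G_n)\;\leq\;\binom{n}{2}-\sum_{i=1}^p\binom{|A_i|}{2}\;=\;\binom{n}{2}-\sum_{i=1}^p\binom{m_i+s_i}{2},
\]
while $t(n,p)=\binom{n}{2}-\sum_{i=1}^p\binom{m_i}{2}$. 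Subtracting, the lemma is equivalent to the inequality
\[
\sum_{i=1}^p\Bigl[\binom{m_i+s_i}{2}-\binom{m_i}{2}-\binom{|s_i|}{2}\Bigr]\;\geq\;0,
\]
which I would establish by an elementary case analysis on the sign of each $s_i$ together with the constraint $\sum_i s_i=0$ and the near-equality of the $m_i$'s.

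First, I would compute the summand in closed form. A direct expansion of $\binom{a}{2}=\tfrac12 a(a-1)$ gives
\[
\binom{m_i+s_i}{2}-\binom{m_i}{2}-\binom{|s_i|}{2}\;=\;\begin{cases}m_i\,s_i,& s_i\geq 0,\\ (m_i-1)\,s_i,& s_i<0,\end{cases}
\]
which we can write uniformly as $m_i s_i+\mathbf{1}[s_i<0]\cdot|s_i|$. Summing over $i$, the inequality we need becomes
\[
\sum_{i=1}^p m_i s_i\;+\sum_{i:\,s_i<0}|s_i|\;\geq\;0.
\]

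Now comes the only step that uses the particular form of the $m_i$'s: since each $m_i$ is either $\lfloor n/p\rfloor$ or $\lceil n/p\rceil$, write $m_i=\lfloor n/p\rfloor+\mathbf{1}[i\in I]$ where $I=\{i:m_i=\lceil n/p\rceil\}$ and $J=[p]\setminus I$. Using $\sum_i s_i=0$,
\[
\sum_{i=1}^p m_i s_i\;=\;\lfloor n/p\rfloor\sum_{i=1}^p s_i+\sum_{i\in I}s_i\;=\;\sum_{i\in I}s_i.
\]
Hence the quantity above equals $\sum_{i\in I}s_i+\sum_{i:\,s_i<0}|s_i|$, and splitting the first sum by the sign of $s_i$ yields
\[
\sum_{i\in I,\,s_i\geq 0}s_i\;+\;\sum_{i\in J,\,s_i<0}|s_i|\;\geq\;0,
\]
since both terms are nonnegative. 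Combining everything gives the desired bound on $e(G_n)$.

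The only mildly delicate point is this last cancellation: the two "+1" contributions from $I$-indices with negative $s_i$ are exactly matched by the corresponding $|s_i|$ terms, which is why the freedom to choose which classes are oversized versus undersized does not spoil the inequality. Everything else is a one-line convexity-style computation, so I do not expect any real obstacle beyond correctly organising the bookkeeping between $I$ and $J$.
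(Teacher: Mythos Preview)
Your proof is correct. The paper does not actually supply a proof of this lemma; it merely states it and cites Simonovits~\cite{Simonovits1968}, so there is no ``paper's own proof'' to compare against. Your argument---bounding $e(G_n)$ by the edge count of the complete $p$-partite graph on the $A_i$, then reducing to the signed identity $\binom{m_i+s_i}{2}-\binom{m_i}{2}-\binom{|s_i|}{2}=m_is_i+\mathbf{1}[s_i<0]\,|s_i|$ and exploiting $\sum_i s_i=0$ together with $m_i\in\{\lfloor n/p\rfloor,\lceil n/p\rceil\}$---is exactly the standard short derivation, and your final cancellation step is handled cleanly. No gaps.
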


\begin{lemma}\label{lemma2}\cite{Simonovits1968}
Let $\mathcal{F}$ be a family of graphs with $\mathcal{M}_0(\mathcal{F})=\{S_{k+1}\}$ and $p(\mathcal{F})=p$. If each $F\in\mathcal{F}$ contains $Q(p,k)$ as a subgraph, then each graph in $T(n,p;\mathcal{U}^\prime_{n,k})$ does not contain any $F\in \mathcal{F}$ as a subgraph.
\end{lemma}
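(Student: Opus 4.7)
The plan is to reduce everything to showing that no $G \in T(n,p;\mathcal{U}'_{n,k})$ contains $Q(p,k)$ as a subgraph; since by hypothesis every $F \in \mathcal{F}$ contains $Q(p,k)$, this automatically gives $F \not\subseteq G$. I would proceed by induction on $p$, arguing by contradiction inside the inductive step.

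Assume $Q(p,k) \subseteq G$, with the apex $v$ placed in class $C_1$ of $T(n,p)$ and the parts $V_1,\ldots,V_p$ of $T(pk,p)$ (each of size $k$) placed somewhere in $V(G)$. The triangle-freeness of each $G[C_j]$ together with the maximum degree bound $\Delta(G[C_j]) \le k-1$ immediately forces three structural constraints. First, $(V_1 \cup \cdots \cup V_p) \cap C_1 \subseteq V_a$ for some single index $a$, because two such vertices from different parts would form a triangle with $v$ inside $G[C_1]$; moreover $|V_a \cap C_1| \le k-1$. Second, for each $j \ge 2$, letting $S_j = \{i : V_i \cap C_j \ne \emptyset\}$, one has $|S_j| \le 2$, otherwise a transversal over three $V_i$'s in $C_j$ produces a triangle in $G[C_j]$. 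Third, when $|S_j|=2$, $G[C_j]$ contains a copy of $K_{|V_i \cap C_j|,\,|V_{i'} \cap C_j|}$, which forces each of these intersection sizes to be at most $k-1$.

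Next I split into two cases based on how the parts distribute among the classes. Let $d_i$ denote the number of classes that $V_i$ meets. If $d_i \ge 2$ for every $i$, then a double count of (part, class) incidences gives $2p \le \sum_i d_i = \sum_j |S_j| \le 1 + 2(p-1) = 2p-1$, where the leading $1$ uses $S_1 \subseteq \{a\}$; this is a contradiction. Otherwise some $V_{i^*}$ lies entirely inside a single class $C_{j^*}$, and $j^* \ne 1$ because $|V_a \cap C_1| \le k-1 < k = |V_{i^*}|$. If a second part also meets $C_{j^*}$, the bipartite subgraph $K_{k,r}$ with $r \ge 1$ sitting inside $G[C_{j^*}]$ forces a vertex of degree at least $k$, contradicting $\Delta(G[C_{j^*}]) \le k-1$. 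Hence $S_{j^*} = \{i^*\}$, and deleting $V_{i^*}$ together with the class $C_{j^*}$ produces an embedding of $Q(p-1,k) = Q(p,k) - V_{i^*}$ into the graph $G - C_{j^*}$, which has one fewer class.

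The base case $p = 1$ is immediate since $Q(1,k) = K_{1,k}$ cannot embed into any graph of maximum degree $k-1$. The one subtlety I expect to have to handle is that $G - C_{j^*}$ is in general no longer balanced, so the induction must be carried out in the slightly more general setting of graphs that are a complete multipartite graph on their classes together with a triangle-free maximum-degree-$(k-1)$ graph inside each class. However, balance is never used in any of the structural observations above, so this extension is a routine adjustment rather than a real obstacle.
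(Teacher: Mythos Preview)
The paper does not actually give a proof of this lemma: it is simply quoted from Simonovits \cite{Simonovits1968} and used as a black box. So there is no ``paper's proof'' to compare against directly.

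That said, your argument is correct. The reduction to showing $Q(p,k)\not\subseteq G$ is immediate, and your three structural constraints (only one part can meet the apex's class, at most two parts can meet any other class, and the complete bipartite subgraph bounds the intersection sizes) are all valid consequences of triangle-freeness and $\Delta(G[C_j])\le k-1$. The double-counting dichotomy is clean: either every $V_i$ is split across at least two classes, forcing $2p\le 1+2(p-1)$, or some $V_{i^*}$ sits entirely in one class $C_{j^*}$, which must then contain \emph{only} $V_{i^*}$ (else a vertex of degree $\ge k$ appears), and you strip off that class and induct. Your remark that balance is never used is exactly right and handles the only loose end.

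It is worth noting that the paper's proof of the next lemma, Lemma~\ref{lemma3}, tackles the harder coloring version of this statement and follows the same induction-on-$p$ skeleton you use here. There, however, each class carries an extra edge on top of the $(k-2)$-regular triangle-free graph, which destroys the clean ``at most two parts per class'' bound and forces a much longer case analysis (the paper's Claims~1 and~2 and Observations~1--3). Your proof can be read as the stripped-down prototype of that argument, with the extra edge removed.
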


The following definition is a key notion in the proof of our lemma and theorems.
\begin{definition}
The representing graph, denoted by $L_n$, of a coloring $c_n$  is a spanning subgraph of $K_n$ obtained by taking one edge of each color in $c_n$ (where $L_n$ may contain isolated vertices).
\end{definition}

We need the following lemma to show that the colorings in Theorem~\ref{main2} are $\mathcal{F}$-free.

\begin{lemma}\label{lemma3}
Let $\mathcal{F}$ be a family of graphs and $k\geq 2$. If each $F\in\mathcal{F}$ contains $Q(p,k)$ as a subgraph, then the colorings in $\mathcal{C}^\prime(n,k-1,p)$ are $\mathcal{F}$-free.
\end{lemma}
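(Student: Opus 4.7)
The plan is to show that any rainbow copy of a forbidden graph would embed $Q(p,k)$ into an explicit ``rainbow envelope'' of the coloring, and then rule this out by a simple degree count inside each partite set. Since each $F\in\mathcal F$ contains $Q(p,k)$, it suffices to prove that no coloring $c\in\mathcal C^\prime(n,k-1,p)$ admits a rainbow $Q(p,k)$.

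First I would describe the rainbow envelope. A coloring $c\in\mathcal C^\prime(n,k-1,p)$ takes a rainbow coloring of some $G\in T(n,p;\mathcal U^\prime_{n,k-1})$ (or of some $K_{n_1,\dots,n_p}(n;\mathcal U^\prime_{n,k-1})$ with $n_1-n_p=2$ and the parity conditions) and then uses one extra color per partite set to color the remaining edges inside that part. Write $V_1,\dots,V_p$ for the partite sets and $H_i\in\mathcal U^\prime_{n_i,k-1}$ for the $(k-2)$-regular or nearly $(k-2)$-regular triangle-free graph planted inside $V_i$. A rainbow subgraph can use every rainbow-colored edge (namely, edges of $T(n,p)$ and of the $H_i$'s) but at most one edge from each of the $p$ monochromatic classes. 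Consequently, any rainbow subgraph is contained in
\[\widetilde L \;=\; T(n,p)\,\cup\,\bigcup_{i=1}^{p}\bigl(H_i\cup\{e_i^{*}\}\bigr),\]
where $e_i^{*}$ is some edge of $\overline{H_i}$ lying inside $V_i$. Since $H_i$ has maximum degree $k-2$, adding $e_i^{*}$ raises the maximum degree within $V_i$ to at most $k-1$; this is the key quantitative bound I will exploit.

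Next I would assume for contradiction that $Q(p,k)\subseteq \widetilde L$, with apex $v$ and balanced $p$-partition $U_1\cup\cdots\cup U_p$ of $T(pk,p)$, each $|U_i|=k$. Because all edges between distinct $V_j$'s are present in $\widetilde L$, any independent set of $\widetilde L$ is contained in a single $V_j$; so each $U_i$ lies in some unique $V_{j(i)}$. The map $i\mapsto j(i)$ must be injective: if two classes $U_i,U_{i'}\subseteq V_j$ collided, the complete bipartite graph $K_{k,k}$ between $U_i$ and $U_{i'}$ would live inside $\widetilde L[V_j]$, forcing every vertex of $U_i$ to have $k$ neighbors inside $V_j$, contradicting the degree bound $k-1$. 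Hence, after reindexing, $U_i\subseteq V_i$ for all $i$.

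Finally, the apex $v$ lies in some $V_a$. Since $v$ is adjacent in $Q(p,k)$ to every vertex of $U_a\subseteq V_a$, the vertex $v$ would need at least $k$ neighbors inside $V_a$ in $\widetilde L$, again contradicting the bound of $k-1$. Therefore $Q(p,k)\not\subseteq\widetilde L$, so no rainbow copy of any $F\in\mathcal F$ exists, proving the coloring is $\mathcal F$-free. The only real obstacle in this plan is identifying the correct rainbow envelope and confirming the injectivity step in the bijection argument; once these are in hand, the rest is a direct degree-counting argument that mirrors (but strengthens) Lemma~\ref{lemma2}.
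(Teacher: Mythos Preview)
Your rainbow-envelope description and the degree bound $\Delta(\widetilde L[V_j])\le k-1$ are correct, but the argument collapses at the step ``any independent set of $\widetilde L$ is contained in a single $V_j$; so each $U_i$ lies in some unique $V_{j(i)}$.'' The set $U_i$ is independent in $Q(p,k)$, not in $\widetilde L$. A subgraph embedding $Q(p,k)\hookrightarrow\widetilde L$ sends edges to edges but is free to send non-edges to edges, so the image of $U_i$ may well meet several parts $V_j$ (the cross-edges of $\widetilde L$ between those parts are simply not used by the copy of $Q(p,k)$). Once the $U_i$ are allowed to straddle parts, the pure max-degree bound $k-1$ no longer forces a contradiction: for instance, with $p=2$ one can place the apex and one vertex from each of $U_1,U_2$ in $V_1$ (a triangle there) and the remaining $k-1$ vertices of each $U_i$ in $V_2$; every vertex degree inside its part is at most $k-1$. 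Ruling such configurations out requires the finer information that in $H_j\cup\{e_j^*\}$ only the two endpoints of $e_j^*$ can have degree $k-1$ and that $H_j$ is triangle-free.

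This is exactly why the paper's proof is long: after disposing of $k\le 3$ via Lemma~\ref{lemma2}, it runs an induction on $p$, tracks how each $Q_i$ splits among the $V_j$'s, and repeatedly uses the triangle-freeness of $H_j$ (their Observation~1) together with the fact that only $x_j,y_j$ attain degree $k-1$ to eliminate the mixed placements case by case. Your plan would need, at minimum, a replacement for the ``$U_i\subseteq V_{j(i)}$'' step that genuinely exploits these structural features; the degree bound alone is not enough.
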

\begin{proof} It is sufficient to show that each coloring in $\mathcal{C}^\prime(n,k-1,p)$ does not contain a rainbow $Q(p,k)$ as a subgraph. If $k\leq3$, then the representing graph $L_n$ of a coloring in $\mathcal{C}^\prime(n,k-1,p)$  is a subgraph of some graph in $T(n,p;\mathcal{U}^\prime_{n,k})$. Hence  by Lemma~\ref{lemma2}, $L_n$ does not contain any $F\in \mathcal{F}$ as a subgraph. Thus we finish the proof of the lemma for $k\leq 3$.

Let $k\geq4$, we will prove this lemma by applying mathematical induction on $p$. If $p=1$, then the representing graph $L_n$ of a coloring in $\mathcal{C}^\prime(n,k-1,p)$ is a graph with $\Delta(L_n)=k-1$. Since $Q(1,k)=S_{k+1}$, the lemma is obviously true. It will be shown that if the statement is not true for $p$, then it is not true for some $t\leq p-1$. This implies the lemma.

Suppose that there is a coloring $c_n$ in $\mathcal{C}^\prime(n,k-1,p)$ containing a rainbow $Q(p,k)$ as a subgraph.  We partition $V(Q(p,k))$ into $\{q\}\cup Q_1\cup \ldots \cup Q_p$ so that there is no edge in $Q(p,k)[Q_i]$ and $|Q_i|=k$ for $i=1,\ldots,p$. Let $L_n$ be a representing graph of $c_n$ which contains $Q(p,k)$ as a subgraph. Let $V_1\cup\ldots\cup V_p$ be a vertex partition of $L_n$ such that after removing an edge $x_iy_i$ of $L_n[V_i]$, the resulting graph is a $(k-2)$-regular triangle-free graph or a nearly $(k-2)$-regular triangle-free graph for $i=1,\ldots,p$. We will show that there is a $Q(t,k)$ whose vertices lie in $t$ partite sets of $L_n$ for some $t\leq p-1$, or we can not find all vertices of $Q(p,k)$ in $L_n$. This proves the lemma.\\

{\bf Observation 1.} Any $k-1$ vertices of $L_n[V_i]$ have at most one common neighbour in $L_n[V_i]$. In other words, in $L_n[V_i]$, the common neighbours of  any two vertices are at most $k-2$.\\

\begin{proof} It follows from that $x_i$ and $y_i$ are the only two vertices with degree $k-1$ in $L_n[V_i]$ and they are joint to each other.\end{proof}

Without loss of generality, let $q\in V_1$ and $q\neq y_1$. Since $L_n[V_1]-x_1y_1$ is a triangle-free graph, there is no edge in $L_n[N_{L_n[V_1]}(q)\setminus \{y_1\}]$. Hence $N_{L_n[V_1]}(q)\setminus \{y_1\}$ belongs to at most one partite set, say $Q_1$, of $Q(p,k)$.  Suppose that  $|N_{L_n[V_1]}(q)\setminus \{y_1\}\cap Q_1|\geq 1$ and $|N_{L_n[V_1]}(q)\setminus \{y_1\}\cap (\cup_{i=2}^pQ_i)|=0$. Indeed, if $N_{L_n[V_1]}(q)\setminus \{y_1\}\cap V(Q(p,k))=\emptyset$, then $L_n[\cup_{i=2}^{p}V_i]$ contains
$$Q(p-1,k)\subseteq T(pk-k,p-1)\vee \overline{K}_{k-1}$$
 as a subgraph. Hence the statement is not true for $p-1$, and we are done.

If $y_1$ does not belong to $\cup^{p}_{i=2}Q_{i}$, then $L_n[\cup_{i=2}^{p}V_i]$ contains $Q(p-1,k)$ as a subgraph, we are done. Without loss of generality, let $y_1$ belongs to $Q_{p}$ ($y_1$ must be joint to $q$). Since $|N_{L_n[V_1]}(q)\setminus \{y_1\}|\leq k-2$,   there are at least two vertices of $Q_1$ which do not belong to $V_1$. Let $Q^{\prime}_1= Q_1-N_{L_n[V_1]}(x)$ and $|Q^{\prime}_1|=\ell\geq 2$. There is no edge in $L_n[Q^{\prime}_1]$, otherwise $L_n[\cup_{i=2}^{p}V_i]$ contains $$Q(p-1,k)\subseteq(K_{2}\cup \overline{K}_{\ell-2})\vee T(pk-2k,p-2)\vee\overline{K}_{k-1}$$ as a subgraph, we are done.

Actually, we have proved the following claim.\\

{\bf Claim 1.} For $t\in \{2,\ldots,p\}$, let $Q^\prime_{t-1}=Q_{t-1}\cap (\cup_{i=t}^{p}V_i)$. If $(\cup_{i=t}^{p}Q_{i})\cap (\cup_{i=1}^{t-1}V_i)=\{y_1\}$ and $|Q_{t-1}^\prime|\geq 2$, then $e(L_n[Q_{t-1}^\prime])=0$.\\

\begin{proof}
Otherwise, $L_n[\cup_{i=t}^{p}V_i]$ contains $$Q(p-t+1,k)\subseteq K_{2} \vee T(pk-tk+k,p-t+1)\vee\overline{K}_{k-1}$$ as a subgraph. Thus the claim holds.
\end{proof}

\begin{center}
\begin{tikzpicture}[scale = 0.5]

\filldraw[fill=black] (-10,2.5) circle(2pt);
\filldraw[fill=black] (-10,1.5) circle(2pt);
\draw [line width=1pt](-10.5,-2) -- (-10.5,-0.5);
\draw [line width=1pt](-9.5,-2) -- (-9.5,-0.5);
\draw [line width=1pt](-9.5,-2) -- (-10.5,-2);
\draw [line width=1pt](-9.5,-0.5) -- (-10.5,-0.5);
\draw (-10,0) ellipse (1.2 and 4);
\draw (-7,0) ellipse (1.2 and 4);
\filldraw[fill=black] (-4.5,0) circle(2pt);
\filldraw[fill=black] (-3.5,0) circle(2pt);
\filldraw[fill=black] (-2.5,0) circle(2pt);
\draw (0,0) ellipse (1.2 and 4);
\draw (3,0) ellipse (1.2 and 4);
\draw (6,0) ellipse (1.2 and 4);
\draw (9,0) ellipse (1.2 and 4);
\filldraw[fill=black] (11.5,0) circle(2pt);
\filldraw[fill=black] (12.5,0) circle(2pt);
\filldraw[fill=black] (13.5,0) circle(2pt);
\draw (16,0) ellipse (1.2 and 4);

\draw node at (-10,-5){$V_1$};
\draw node at (-7,-5){$V_2$};
\draw node at (0,-5){$V_{t-1}$};
\draw [line width=1pt](-0.5,2.5) -- (-0.5,-1.5);
\draw [line width=1pt](0.5,2.5) -- (0.5,-1.5);
\draw [line width=1pt](0.5,2.5) -- (-0.5,2.5);
\draw [line width=1pt](0.5,-1.5) -- (-0.5,-1.5);
\draw node at (0,-2){$Q_{t-1}$};

\draw node at (3,-5){$V_{t}$};
\draw [line width=1pt](2.5,2.5) -- (2.5,1.5);
\draw [line width=1pt](3.5,2.5) -- (3.5,1.5);
\draw [line width=1pt](3.5,2.5) -- (2.5,2.5);
\draw [line width=1pt](3.5,1.5) -- (2.5,1.5);
\draw node at (3,1){$Q_{t-1}$};
\draw [line width=1pt](2.5,0.5) -- (2.5,-0.5);
\draw [line width=1pt](3.5,0.5) -- (3.5,-0.5);
\draw [line width=1pt](3.5,0.5) -- (2.5,0.5);
\draw [line width=1pt](3.5,-0.5) -- (2.5,-0.5);
\draw node at (3,-0.2){$x_{t}$};
\filldraw[fill=black] (3,0.2) circle(2pt);
\draw node at (3,-1){$Q_{t}$};
\draw [line width=1pt](2.5,-2.5) -- (2.5,-1.5);
\draw [line width=1pt](3.5,-2.5) -- (3.5,-1.5);
\draw [line width=1pt](3.5,-2.5) -- (2.5,-2.5);
\draw [line width=1pt](3.5,-1.5) -- (2.5,-1.5);
\draw node at (3,-3){$Q_{t+1}$};
\filldraw[fill=black] (3,-1.8) circle(2pt);
\draw node at (3,-2.2){$y_{t}$};

\draw node at (6,-5){$V_{t+1}$};
\draw [line width=1pt](5.5,2.5) -- (5.5,-1.5);
\draw [line width=1pt](6.5,2.5) -- (6.5,-1.5);
\draw [line width=1pt](6.5,2.5) -- (5.5,2.5);
\draw [line width=1pt](6.5,-1.5) -- (5.5,-1.5);
\draw node at (6,-2){$Q_{t}$};

\draw node at (9,-5){$V_{t+2}$};
\draw [line width=1pt](8.5,2.5) -- (8.5,-1.5);
\draw [line width=1pt](9.5,2.5) -- (9.5,-1.5);
\draw [line width=1pt](9.5,2.5) -- (8.5,2.5);
\draw [line width=1pt](9.5,-1.5) -- (8.5,-1.5);
\draw node at (9,-2){$Q_{t+1}$};

\draw node at (16,-5){$V_{p}$};
\draw node at (-10,-2.5){$Q_1$};
\draw node at (-10,1){$y_1$};
\draw node at (-10,2){$q$};
\draw [line width=1pt](-10.5,1.7) -- (-10.5,0.7);
\draw [line width=1pt](-9.5,1.7) -- (-9.5,0.7);
\draw [line width=1pt](-9.5,1.7) -- (-10.5,1.7);
\draw [line width=1pt](-9.5,0.7) -- (-10.5,0.7);
\draw node at (-10,0){$Q_p$};

\draw [line width=1pt](-7.5,-0.5) -- (-7.5,1);
\draw [line width=1pt](-6.5,-0.5) -- (-6.5,1);
\draw [line width=1pt](-6.5,-0.5) -- (-7.5,-0.5);
\draw [line width=1pt](-6.5,1) -- (-7.5,1);
\draw node at (-7,-1){$Q^\prime_1$};

\draw node at (3,-7){Figure 1};

\end{tikzpicture}
\end{center}

{\bf Observation 2.} $(P_3\cup \overline{K}_{k-4})\vee(K_2\cup \overline{K}_{k-3})\vee\overline{K}_{k-1} $ contains $Q(3,k)$ as a subgraph.\\

{\bf Observation 3.} $(K_3\cup \overline{K}_{k-4})\vee T(2k-2,2)$ contains $Q(2,k)$ as a subgraph.\\

 {\bf Claim 2.} For $t\in \{2,\ldots,p-2\}$, if  $(\cup_{i=t}^{p}Q_{i})\cap (\cup_{i=1}^{t-1}V_i)=\{y_1\}$ and $|Q_{t-1}\cap V_{t}|\geq 2$, then at most one of $Q_{t}\cap V_t,\ldots,Q_p\cap V_t$ is non-empty.\\

\begin{proof}
Suppose that the claim does not hold. Then there are exact two  sets among $Q_{t}\cap V_t,\ldots,Q_p\cap V_t$ which are non-empty, otherwise $L_n[V_t]$ contains $K_4$ as a subgraph, contradicting the fact that $L_n[V_t]-x_ty_t$ is a triangle-free graph. We will prove the claim in the following two cases.\\

{\bf Case 1.} $Q_p\cap V_t=\emptyset.$ (Figure 1)\\

Without loss of generality, let $Q_{t}\cap V_t$, $Q_{t+1}\cap V_t$ be non-empty sets and $(\cup_{i=t+2}^{p}Q_{i})\cap V_{t}=\emptyset$. Since any triangle in $L_n[V_{t}]$ contains the edge  $x_ty_t$ and $|Q_{t-1}\cap V_{t}|\geq 2$, we have that, without loss of generality, $x_t$ is the only vertex of $V_t$ which belongs to $Q_t$ and $y_t$ is the only vertex of $V_t$ which belongs to $Q_{t+1}$. Hence, we have
\begin{equation}\label{eq1 for case1}
|Q_t\cap (\cup_{i=t+1}^p V_i) |=|Q_{t+1}\cap (\cup_{i=t+1}^p V_i) |=|Q_p\cap (\cup_{i=t+1}^p V_i) |=k-1.
\end{equation}
Therefore $L_n[\cup_{i=t+1}^p V_i]$ contains $$\overline{K}_{k-1}\vee \overline{K}_{k-1}\vee \overline{K}_{k-1}\vee T(pk-tk-2k,p-t-2)$$ as a subgraph. Thus if there are at least two sets among $Q_{t},Q_{t+1},Q_{p}$ such that the vertices of each of them contains at least one vertex from each of two of $V_{t+1},\ldots,V_{p}$, then, by Observation 2, $L_n[\cup_{i=t+1}^p V_i]$ contains $$Q(p-t,k)\subseteq(P_3\cup \overline{K}_{k-4})\vee(K_2\cup \overline{K}_{k-3})\vee \overline{K}_{k-1}\vee T(pk-tk-2k,p-t-2)$$
as a subgraph, we are done. Now, without loss of generality, suppose that all those $k-1$ vertices of $Q_t$ are in $V_{t+1}$ and all those $k-1$ vertices of $Q_{t+1}$ are in $V_{t+2}$. Since  $\Delta(L_n[V_i]-x_iy_i)\leq k-2$,  by Observation 1, there is at most one vertex of $Q_{t+2}\cup\ldots\cup Q_p$ in $V_i$ for $i=t+1,t+2$.

For $i\in \{t+1,t+2\}$, if there is no vertex of $Q_{t+2}\cup\ldots\cup Q_p$ in $V_{i}$, then  $L_n[\cup_{j\neq i}^{p}V_j]$ contains
$Q(p-1,k)$ as a subgraph, we are done. By Observation 1, we may assume that there are exact one vertex of $Q_{t+2}\cup\ldots\cup Q_p$ in $V_{t+1}$ and exact one vertex of $Q_{t+2}\cup\ldots\cup Q_p$ in $V_{t+2}$. Moreover, at most one of $Q_p\cap V_{t+1}$ and $Q_p\cap V_{t+2}$ is non-empty set. Otherwise, since $k\geq4$, by Observation 3, $L_n[\cup_{i=t+1}^{p}V_i]$ contains
$$Q(p-t,k)\subseteq(K_3\cup \overline{K}_{k-4})\vee T(2k-2,2)\vee T(pk-tk-2k,p-t-2)$$
as a subgraph, and we are done. Thus, without loss of generality, there is exactly one vertex of $Q_{t+2}$ which belongs to $V_{t+1}$. Thus,  we have

\begin{equation}\label{eq2 for case1}
|Q_{t+1}\cap (\cup_{i=t+2}^p V_i) |=|Q_{t+2}\cap (\cup_{i=t+2}^p V_i) |=|Q_p\cap (\cup_{i=t+2}^p V_i) |=k-1.
\end{equation}
Repeat the previous proof $p-t-3$ times (from (\ref{eq1 for case1}) to (\ref{eq2 for case1})), we have
$$|Q_{p-2}\cap (V_{p-1}\cup V_p) |=|Q_{p-1}\cap (V_{p-1}\cup V_p) |=|Q_p\cap (V_{p-1}\cup V_p) |=k-1.$$
Moreover, without loss of generality, we have $|Q_{p-2}\cap V_{p-1}|=|Q_{p-1}\cap V_{p}|=k-1$. Otherwise there are two of $Q_{p-2}$, $Q_{p-1}$, $Q_{p}$ such that the vertices of each of them contains vertices from both of $V_{p-1}$ and $V_{p}$. Hence $L_n[V_{p-1}\cup V_p]$ contains
$$Q(2,k)\subseteq (P_3\cup \overline{K}_{k-4})\vee(K_2\cup \overline{K}_{k-3})\vee \overline{K}_{k-1}$$
 as a subgraph, we are done. Thus, by Observation 1, we have $|Q_{p}\cap (V_{p-1}\cup V_p)|=|Q_{p}\cap V_p|+|Q_{p}\cap V_{p-1}|\leq 2,$ contradicting $|Q_p\cap (V_{p-1}\cup V_p) |=k-1\geq 3$ ($k\geq 4$).\\

{\bf Case 2.} $Q_p\cap V_t\neq \emptyset.$ (Figure 2)\\

Without loss of generality, let $Q_{t}\cap V_t\neq \emptyset$ and $(\cup_{i=t+1}^{p-1}Q_{i})\cap V_{t}=\emptyset$. There exists a $j\in\{t+1,\ldots,p\}$ such that  $|Q_{t}\cap V_{j}|=k-1$. Otherwise, $L_n[\cup_{i=t+1}^{p} V_i]$ contains $$Q(p-t,k)\subseteq (\overline{K}_{k-4}\cup P_3)\vee \overline{K}_{k-2}\vee T(pk-tk-k,p-t-1)$$ a subgraph. Moreover, there exists a $j^\prime\in\{t+1,\ldots,p\}$ such that  $|Q_{p}\cap V_{j^\prime}|=k-2$, otherwise, $L_n[\cup_{i=t+1}^{p} V_i]$ contains $$Q(k,p-t)\subseteq (\overline{K}_{k-4}\cup K_2)\vee \overline{K}_{k-1}\vee T(pk-tk-k,p-t-1)$$ as a subgraph. By Observation 1, we have $j\neq j^\prime$. Without loss of generality, let $|Q_{t}\cap V_{t+1}|=k-1$ and $|Q_{p}\cap V_{p}|=k-2$. Hence, by Observation 1, there is at most one vertex of $\cup_{i=t+2}^{p}Q_i$ belongs to $V_{t+1}$. There is exactly one vertex of $\cup_{i=t+2}^{p}Q_i$ belongs to $V_{t+1}$. Otherwise $L_n[\cup_{i\neq t+1}^{p}V_{i}]$ contains $Q(p-1,k)$ as a subgraph,  and we are done. Hence, without loss of generality, we may suppose that $|Q_{t+1}\cap V_{t+1}|=1$. We may go on this procedure and get  $|Q_{p-1}\cap V_{p}|=k-1$. Hence, by Observation 1, we have $|Q_{p}\cap V_{p}|\leq 1$, a contradiction to $|Q_{p}\cap V_{p}|=k-2\geq 2$.\end{proof}

\begin{center}
\begin{tikzpicture}[scale = 0.5]

\filldraw[fill=black] (-10,2.5) circle(2pt);
\filldraw[fill=black] (-10,1.5) circle(2pt);
\draw [line width=1pt](-10.5,-2) -- (-10.5,-0.5);
\draw [line width=1pt](-9.5,-2) -- (-9.5,-0.5);
\draw [line width=1pt](-9.5,-2) -- (-10.5,-2);
\draw [line width=1pt](-9.5,-0.5) -- (-10.5,-0.5);
\draw (-10,0) ellipse (1.2 and 4);
\draw (-7,0) ellipse (1.2 and 4);
\filldraw[fill=black] (-4.5,0) circle(2pt);
\filldraw[fill=black] (-3.5,0) circle(2pt);
\filldraw[fill=black] (-2.5,0) circle(2pt);
\draw (0,0) ellipse (1.2 and 4);
\draw (3,0) ellipse (1.2 and 4);
\draw (6,0) ellipse (1.2 and 4);
\draw (9,0) ellipse (1.2 and 4);
\filldraw[fill=black] (11.5,0) circle(2pt);
\filldraw[fill=black] (12.5,0) circle(2pt);
\filldraw[fill=black] (13.5,0) circle(2pt);
\draw (16,0) ellipse (1.2 and 4);

\draw node at (-10,-5){$V_1$};
\draw node at (-7,-5){$V_2$};
\draw node at (0,-5){$V_{t-1}$};
\draw [line width=1pt](-0.5,2.5) -- (-0.5,-0.5);
\draw [line width=1pt](0.5,2.5) -- (0.5,-0.5);
\draw [line width=1pt](0.5,2.5) -- (-0.5,2.5);
\draw [line width=1pt](0.5,-0.5) -- (-0.5,-0.5);
\draw node at (0,-1){$Q_{t-1}$};

\draw node at (3,-5){$V_{t}$};
\draw [line width=1pt](2.5,2.5) -- (2.5,1.5);
\draw [line width=1pt](3.5,2.5) -- (3.5,1.5);
\draw [line width=1pt](3.5,2.5) -- (2.5,2.5);
\draw [line width=1pt](3.5,1.5) -- (2.5,1.5);
\draw node at (3,1){$Q_{t-1}$};
\draw [line width=1pt](2.5,0.5) -- (2.5,-0.5);
\draw [line width=1pt](3.5,0.5) -- (3.5,-0.5);
\draw [line width=1pt](3.5,0.5) -- (2.5,0.5);
\draw [line width=1pt](3.5,-0.5) -- (2.5,-0.5);
\draw node at (3,-0.2){$x_{t}$};
\filldraw[fill=black] (3,0.2) circle(2pt);
\draw node at (3,-1){$Q_{t}$};
\draw [line width=1pt](2.5,-2.5) -- (2.5,-1.5);
\draw [line width=1pt](3.5,-2.5) -- (3.5,-1.5);
\draw [line width=1pt](3.5,-2.5) -- (2.5,-2.5);
\draw [line width=1pt](3.5,-1.5) -- (2.5,-1.5);
\draw node at (3,-3){$Q_{p}$};
\filldraw[fill=black] (3,-1.8) circle(2pt);
\draw node at (3,-2.2){$y_{t}$};

\draw node at (6,-5){$V_{t+1}$};
\draw [line width=1pt](5.5,2.5) -- (5.5,-0.5);
\draw [line width=1pt](6.5,2.5) -- (6.5,-0.5);
\draw [line width=1pt](6.5,2.5) -- (5.5,2.5);
\draw [line width=1pt](6.5,-0.5) -- (5.5,-0.5);
\draw node at (6,-1){$Q_{t}$};
\draw [line width=1pt](5.5,-1.5) -- (5.5,-2.5);
\draw [line width=1pt](6.5,-1.5) -- (6.5,-2.5);
\draw [line width=1pt](6.5,-1.5) -- (5.5,-1.5);
\draw [line width=1pt](6.5,-2.5) -- (5.5,-2.5);
\draw node at (6,-3){$Q_{t+1}$};

\draw node at (9,-5){$V_{t+2}$};

\draw node at (16,-5){$V_{p}$};
\draw [line width=1pt](15.5,2.5) -- (15.5,-1.5);
\draw [line width=1pt](16.5,2.5) -- (16.5,-1.5);
\draw [line width=1pt](16.5,2.5) -- (15.5,2.5);
\draw [line width=1pt](16.5,-1.5) -- (15.5,-1.5);
\draw node at (16,-2){$Q_{p}$};

\draw node at (-10,-2.5){$Q_1$};
\draw node at (-10,1){$y_1$};
\draw node at (-10,2){$q$};
\draw [line width=1pt](-10.5,1.7) -- (-10.5,0.7);
\draw [line width=1pt](-9.5,1.7) -- (-9.5,0.7);
\draw [line width=1pt](-9.5,1.7) -- (-10.5,1.7);
\draw [line width=1pt](-9.5,0.7) -- (-10.5,0.7);
\draw node at (-10,0){$Q_p$};

\draw [line width=1pt](-7.5,-0.5) -- (-7.5,1);
\draw [line width=1pt](-6.5,-0.5) -- (-6.5,1);
\draw [line width=1pt](-6.5,-0.5) -- (-7.5,-0.5);
\draw [line width=1pt](-6.5,1) -- (-7.5,1);
\draw node at (-7,-1){$Q^\prime_1$};

\draw node at (3,-7){Figure 2};

\end{tikzpicture}
\end{center}

Now we return to the proof of the lemma. If $(\cup _{i=2}^{p}Q_i)\cap V_2=\emptyset$, then $L_n[\cup_{i=3}^{p}V_i]$ contains
$$Q(p-2,k)\subseteq\overline{K}_{k-1}\vee T(pk-2k,p-2)$$
 as a subgraph, we are done. By Claim 2, suppose that there exists a $j\in \{2,\ldots,p\}$ such that $V_2\cap Q_j \neq \emptyset$ and $V_2\cap (\cup _{i=2}^{p}Q_i\setminus Q_j) =\emptyset$. Since $|Q_1^\prime|=\ell\geq 2$, by Observation 1, we have $|V_2\cap Q_j| \leq k-2$. If $j=p$, then $L_n[V_3\cup \ldots \cup V_p]$ contains
 $$Q(p-2,k)\subseteq K_1\vee T(pk-2k,p)$$
  as a subgraph, we are done. Hence we have $j\in\{2,\ldots,p-1\}$. Without loss of generality, let $(\cup_{i=3}^{p}Q_{i})\cap V_2=\emptyset$ and $Q_2\cap V_{2}\neq \emptyset$. By $|V_2\cap Q_2| \leq k-2$, we have $|Q_2\cap( \cup_{i=3}^{p} V_i)|\geq 2$. By Claim 1, we have $e(L_n[Q_2\cap( \cup_{i=3}^{p} V_i)])=0$. Hence, without loss of generality, we have $|Q_2\cap V_3|\geq 2$ and $Q_2\cap( \cup_{i=4}^{p} V_i)=\emptyset$.    By  Claims 1, 2 and  Observation 1,  we can go on this procedure, and finally get   $|Q_{p-1}\cap V_p|\geq 2$ and $|Q_p\cap V_p|\leq k-2$. Since $Q_p\cap(\cup_{i=2}^{p-1} V_i)=\emptyset$, we have  $|Q_p\cap(\cup_{i=1}^{p} V_i)|= |Q_p\cap V_1|+|Q_p\cap  V_p|  \leq k-1$, a contradiction. Thus we finish the proof of lemma.\end{proof}

\section{Proof of the main theorems}

\noindent Denote by $T(Np,p;F)$ the graph obtained by embedding an $F$ in one partite set of $T(Np,p)$ and $T(Np,p;F_1,F_2)$ the graph obtained from $T(Np,p)$ by adding an $F_1$ into one partite set  and an $F_2$ into another partite set of $T(Np,p)$.\\

\noindent {\bf Proof of Theorem~\ref{main1}:} First, we present a useful proposition.
\begin{proposition}\label{proposition0} Let $\mathcal{M}_0(\mathcal{F})=\{M_2\}$, $\mathcal{M}_1(\mathcal{F})=\{M_{2k-2}\}$, $v=\max\{v(F):F\in \mathcal{F}\}$ and $k\geq 2$. Let $c_n$ be a coloring of $K_n$ and $L_n$ be a representing graph of $c_n$. If $L_n$ contains a copy of $T(2vp,p;M_{2k})$, then $K_n$ contains a rainbow copy of some $F\in \mathcal{F}$.
\end{proposition}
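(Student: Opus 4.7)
I would pick $F \in \mathcal{F}$ with $v(F) = v$ and work with two embeddings of it: the $\mathcal{M}_0$-embedding $F \subseteq (M_2 \cup \overline{K}_t) \vee T(t,p)$, with distinguished $M_2$-edge $e^{*}$, and an $\mathcal{M}_1$-embedding $F - e^{*} \subseteq (M_{2k-2} \cup \overline{K}_{t'}) \vee T(t',p-1)$, with bipartition classes $A',B'$ (containing $M_{2k-2}$) and Tur\'an classes $D_1, \ldots, D_{p-1}$. Because $\chi(F) = p+2 > p+1 = \chi(F-e^{*})$, both endpoints $v_1^{*}, v_2^{*}$ of $e^{*}$ lie in a single class of the $(p+1)$-coloring induced by the $\mathcal{M}_1$-embedding.

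In $T(2vp,p;M_{2k}) \subseteq L_n$, with partite sets $V_1,\ldots,V_p$ of size $2v$ and matching $\{m_1,\ldots,m_k\} \subseteq V_1$, I would define $k$ candidate embeddings $E_1, \ldots, E_k$ of $F - e^{*}$: in $E_j$, route the matching $M_{2k-2}$ onto the $k-1$ edges $\{m_i : i \neq j\}$, place the non-matching vertices of $A' \cup B'$ on non-matching positions of $V_1$, and send each $D_l$ to $V_{l+1}$. Every edge of $E_j$ is in $L_n$, so $E_j$ realizes $F - e^{*}$ rainbow; adjoining $e^{*}$ at the images of $v_1^{*},v_2^{*}$ yields a candidate rainbow $F$ provided $c_n(e^{*})$ lies outside the $|E(F - e^{*})|$-element color-set $S_j$ of $E_j$.

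The core step is to find some $j$ (possibly after one small adjustment) for which this holds. Assume for contradiction that $c_n(e^{*}) \in S_j$ for every $j$, and let $e'^{*} \in L_n$ be the unique edge with $c_n(e'^{*}) = c_n(e^{*})$; then $e'^{*}$ must appear in every $E_j$. A case analysis on the location of $e'^{*}$ in $L_n$ disposes of most possibilities immediately: if $e'^{*} = m_l$ it is absent from $E_l$; if $e'^{*}$ is bipartite with an endpoint at a matching vertex $y_l$ or $z_l$ of $V_1$ it is absent from $E_l$, since $y_l$ and $z_l$ are left unoccupied there; and if $e'^{*}$ sits outside $T(2vp,p;M_{2k})$ it is absent from every $E_j$. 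The only remaining possibility is that $e'^{*} = uv$ is a bipartite edge whose endpoints $u,v$ are the (fixed, $j$-independent) images of non-matching vertices $\alpha,\beta$ with $\alpha\beta \in E(F-e^{*})$. I would resolve this by repositioning one of $\alpha, \beta$—whichever is not an endpoint of $e^{*}$—to a free alternative $u' \neq u$ inside its partite set; since $|V_r| = 2v$ while each class of the $\mathcal{M}_1$-embedding has at most $v$ vertices, at least $v$ such choices exist. The modified embedding $E'_j$ remains a valid rainbow realization of $F - e^{*}$, and because $e'^{*}$ is the unique $L_n$-representative of $c_n(e^{*})$, no newly introduced edge incident to $u'$ can carry that color (any such coincidence would force $u' = u$). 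Hence $c_n(e^{*}) \notin S'_j$, and $E'_j$ delivers a rainbow $F$.

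The main obstacle is precisely the last case. The $k$ matching swaps dispatch any $e'^{*}$ that touches the $V_1$-matching, but are useless when $e'^{*}$ is ``pinned'' between two non-matching images; the perturbation argument rescues that situation but leans on two features of the hypothesis, namely the generous $2v$-vertex slack in each partite set (which enables the free reposition) and the uniqueness of the $L_n$-representative of any given color (which prevents the reposition from creating a new color coincidence with $c_n(e^{*})$).
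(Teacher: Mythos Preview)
Your argument has a real gap. You treat $c_n(e^{*})$ as one fixed color---``let $e'^{*}\in L_n$ be the unique edge with $c_n(e'^{*})=c_n(e^{*})$; then $e'^{*}$ must appear in every $E_j$''---but this is only meaningful if the images of $v_1^{*},v_2^{*}$ are the same in every $E_j$. Your chromatic observation (that $v_1^{*},v_2^{*}$ share a class of the $(p+1)$-coloring $A',B',D_1,\ldots,D_{p-1}$) does not deliver this: if that shared class happened to be $A'$ or $B'$ and one of the $v_i^{*}$ were an endpoint of the $M_{2k-2}$, its image would move with $j$, and there would be no single $e'^{*}$ to track through your case analysis.

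The missing step is that $v_1^{*},v_2^{*}$ cannot lie in $A'\cup B'$ at all: if they did, $F$ would sit inside $(M_{2k-2}+e^{*})\vee T(t',p-1)$, and a matching plus one edge is still a forest, hence bipartite, forcing $\chi(F)\le p+1$---a contradiction. So both endpoints are in some $D_l$; equivalently, $F\subseteq T(vp,p;M_{2k-2},M_2)$ with the matching and the single edge $e^{*}$ in \emph{different} partite classes. That is exactly the paper's key observation, and once you have it the entire $k$-embedding apparatus plus perturbation is unnecessary. The paper simply picks any $xy$ inside a second class $B_2$ of $T(2vp,p;M_{2k})\subseteq L_n$, swaps the unique $L_n$-edge of color $c_n(xy)$ for $xy$, and notes that the new representing graph still contains $T(vp,p;M_{2k-2},M_2)$: the swap destroys at most one of the $k$ matching edges or one bipartite edge, and the factor-two slack in each class absorbs that. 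One swap, no cases. (A minor point: you should choose $F$ so that $F-e^{*}$ admits the $\mathcal{M}_1$-decomposition, which exists by hypothesis; requiring $v(F)=v$ is neither needed nor obviously compatible with that choice.)
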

\begin{proof} Since $\mathcal{M}_0(\mathcal{F})=\{M_2\}$ and $\mathcal{M}_1(\mathcal{F})=\{M_{2k-2}\}$, there exist a graph $F\in \mathcal{F}$ and an edge $e$ such that $F-e \subseteq T(vp,p;M_{2k-2})$. Since $p(\mathcal{F}^{-})=p$ and $\mathcal{M}_0(\mathcal{F})=\{M_2\}$, we have $\chi(F)=p+2$. Therefore, $F\subseteq T(vp,p;M_{2k-2},M_2)$. Without loss of generality, let $B_1,\ldots,B_p$ be the $p$ partite vertex sets of $T(2vp,p;M_{2k})$  in $L_n$ and $L_n[B_1]$ contains an $M_{2k}$ as a subgraph. Let $\{x,y\}\subseteq B_2$ and $c_n(xy)$ be the color of $xy$. It is easy to see that after deleting the edge in $L_n$ which is colored by $c_n(xy)$ in $K_n$ and adding the edge $xy$ to $L_n$, the obtained graph contains $T(vp,p;M_{2k-2},M_2)$ as a subgraph. Hence $K_n$  contains a rainbow copy of some $F\in \mathcal{F}$.\end{proof}
First, we show that the coloring described in Theorem~\ref{main1} (\romannumeral2) is $\mathcal{F}$-free. If the representing graph $S_n=K_{k-2}\vee T(n-k+2,p;M_2)$ of this coloring contains an $F\in \mathcal{F}$ as a subgraph, then there are an $F\in \mathcal{F}$ and an edge $e\in F$ such that  $F-e \subseteq K_{k-2}\vee T(n-k+2,p)$. Hence $\mathcal{M}_1(\mathcal{F})$ contains a graph $H\neq M_{2k-2}$, a contradiction. Thus the coloring described in Theorem~\ref{main1} (\romannumeral2) is $\mathcal{F}$-free. For Theorem~\ref{main1} (\romannumeral1), if $\overline{K}_{k-2}\vee T(n-k+2,p;M_2)$ contains some $F\in \mathcal{F}$ as a subgraph, then there are an $F\in \mathcal{F}$ and an edge $e\in F$ such that  $F-e \subseteq \overline{K}_{k-2}\vee T(n-k+2,p)$. Hence $\mathcal{M}_0(\mathcal{F})$ contains a graph $H\neq M_{2k}$, a contradiction. Hence, by the definition of $q$, we have $q\geq1$. Thus the colorings described in Theorem~\ref{main1} (\romannumeral1) are $\mathcal{F}$-free. We will prove Theorem~\ref{main1} (\romannumeral1) (Theorem~\ref{main1} (\romannumeral2) resp.) by progressive induction. Suppose that $c_n$ is an extremal $\mathcal{F}$-free coloring of $K_n$ that uses at least $h^\prime(n,p,k-1)+q$ ($h(n,p,k-1)+1$ resp.) colors. It will be shown that, if $n$ is sufficiently large, then $c_n$ belongs to the coloring set described in the theorem. Let $L_n$ be a representing graph of $c_n$. Obviously, we have

\begin{equation}\label{1}
e(L_n)\geq h^\prime(n,p,k-1) +q
\end{equation}
\begin{equation}\label{1.1}
\left(e(L_n)\geq h(n,p,k-1) +1 \mbox{ resp.}\right).
\end{equation}
Hence
\[
\phi(c_n)=e(L_n)-(h^\prime(n,p,k-1) +q)
\]
\[
\left(\phi(c_n)=e(L_n)-(h(n,p,k-1) +1) \mbox{ resp.}\right)
\]
is a non-negative integer. The theorem will be proved by progressive induction, where $\mathfrak{U}_n$ is the set of extremal $\mathcal{F}$-free colorings of $K_n$. $B$ states that the coloring of $K_n$ belongs to the coloring set  described in the theorem, and $\phi(c_n)$ is a non-negative integer. According to the lemma of progressive induction, it is enough to show that if $c_n$ does not belong to the coloring set described in theorem, then there exists a $c_{n^\prime}$ with $n/2<n^{\prime}<n$ such that $\phi(c_{n^\prime})>\phi(c_n)$ provided $n$ is sufficiently large, where $c_{n^\prime}$ is an extremal $\mathcal{F}$-free coloring of $K_{n^\prime}$.
By Theorem~\ref{erdos-stone} and (\ref{1}) ((\ref{1.1}) resp.), there is an $n_1$, if $n>n_1$, then $L_n$ contains $T(n_2p,p)$ ($n_2$ is sufficiently large) as a subgraph. Any partite class of $T(n_2p,p)$ can not contain $M_{2k}$ as a subgraph, otherwise $L_n$ contains a rainbow copy of some $F\in\mathcal{F}$ (by Proposition~\ref{proposition0}, resp.), a contradiction. Hence there is an induced subgraph $T(n_3p,p)$ of $L_n$ with partite set $\widehat{B}_1,\ldots,\widehat{B}_p$, where $n_3\geq n_2-2(k-1)$. In fact, let $x_1y_1,x_2y_2,\ldots, x_{s_1}y_{s_1}$ be a maximal matching in one class, say $\widehat{B}_1$, of $T(n_2p,p)$, $\widetilde{B}_1=\widehat{B}_1-\{x_1,y_1,\ldots, x_{s_1},y_{s_1}\}$. Then there is no edge in $L_n[\widetilde{B}_1]$ and  there is an induced subgraph $T(n_3p,p)$ of $L_n$.

Let $\epsilon$ be a small constant satisfying
\begin{equation}\label{small}
\epsilon< \frac{1}{k+1}.
\end{equation}
Let $\widetilde{L}=L_n-T(n_3p,p)$. We partition $\widetilde{L}$ by the following produce.
If there is an $x_1\in \widetilde{L}$ which is joint to all the classes of $T(n_3p,p)=T_0$ by more than $\epsilon^2n_3$ vertices, then $T_0$ contains a $T_1=T(\epsilon^2n_3p,p)$ each vertex of which is joint to $x_1$. Generally, if there is an $x_i\in \widetilde{L}$ which is joint to at least $\epsilon^{2i}n_3$ vertices of each class of $T_{i-1}$, then there is a $T_i=T(\epsilon^{2i}n_3p,p) \subseteq T_{i-1}$ each vertices of which is joint to all the vertices $x_1,x_2,\ldots,x_i$. Thus we may define recursively a sequence of graphs. However, this process stops at last after the construction of $T_{k-2}$. Since if we could find a $T_{k-1}\subseteq L_n$, then $K_n$ contains a rainbow copy of some $F\in\mathcal{F}$, a contradiction. In fact, let $c_n(xy)$ be the color of $xy$ such that $x,y$ are in the same partite set of $T_{k-1}$, then there is at most one edge of $L_n[\overline{K}_{k-1}\vee T_{k-1}]$ which is colored by $c_n(xy)$ (note that $L_n[\overline{K}_{k-1}\vee T_{k-1}]$ is an induced subgraph of the representing graph $L_n$). Since $\epsilon^{2k}n_3$ is sufficiently large,  $L_n$ contains a copy of $T(2vp,p;M_{2k})$. Thus $K_n$ contains a rainbow copy of some $F\in\mathcal{F}$ (by Proposition~\ref{proposition0} resp.).

Now suppose the above progress ends at $T_\ell$, $0\leq \ell\leq k-2$. Let $x_1,x_2,\ldots,x_\ell$ be the vertices which are joint to all the vertices of $T_\ell$. Denote by $B_{\ell_1},B_{\ell_2},\ldots,B_{\ell_p}$ the classes of $T_\ell$, partition the remaining vertices into the following vertex sets: If $x$ is joint to less than $\epsilon^{2\ell+2}n_3$ vertices of $B_{\ell_i}$ and joint to every $B_{\ell_j\neq \ell_i}$ to more than $(1-\epsilon)\epsilon^{2\ell}n_3$, then $x\in C_{\ell_i}$. If $x$ is joint to less than $\epsilon^{2\ell+2}n_3$ vertices of $B_{\ell_i}$ and is joint to some of $B_{\ell_j\neq \ell_i}$ to less than $(1-\epsilon)\epsilon^{2\ell}n_3$, then $x\in D$. Obviously, this is a partition of $S_n-T_\ell-\{x_1,x_2,\ldots,x_\ell\}$. Since $\mathcal{M}_0(\mathcal{F})=M_{2k}$ ($\mathcal{M}_0(\mathcal{F})=M_{2}$, $\mathcal{M}_1(\mathcal{F})=M_{2k-2}$ resp.) and any vertex of $C_{\ell_i}$ is joint to less than $\epsilon^{2\ell+2}n_3$ vertices of $B_{\ell_i}$, there are $\epsilon^{2\ell}n_3(1-\epsilon^2k)$ vertices of $B_{\ell_i}$ which is not joint to any vertices of $C_{\ell_i}$. In fact, there are at most $k-1$ independent edges in $B_{\ell_i}\cup C_{\ell_i}$, otherwise, $L_n$ contains $T(2vp,p;M_{2k})$ as a subgraph, a contradiction (by Proposition~\ref{proposition0} resp.). Consider the edges joining $B_{\ell_i}$ and $C_{\ell_i}$ and select a maximal set of independent edges, says $x_1y_1,\ldots,x_qy_q$, $x_{i^{\prime}}\in B_{\ell_i}$, $y_{i^{\prime}}\in C_{\ell_i}$, $1\leq i^{\prime}\leq q\leq k-1$, among them, then the number of vertices of $B_{\ell_i}$ which are joint to at least one of $y_1,y_2,\ldots,y_q$ is less than $\epsilon^{2\ell+2}n_3q$, and the remaining vertices of $B_{\ell_i}$ is not joint to any vertices of $C_i$ by the maximality of $x_1y_1,\ldots,x_qy_q$. Hence we can move $\epsilon^{2\ell+2}n_3k$ vertices of $B_{\ell_i}$ to $C_{\ell_i}$, obtain $B_i$ and $C_i$ such that $B_i\subseteq B_{\ell_i}$, $C_{\ell_i}\subseteq C_i$ and there is no edge between $B_i$ and $C_i$. Let $\ell^\prime=(1-\epsilon^2k)\epsilon^{2\ell}n_3$, we conclude that $T^{\prime}_\ell=T(\ell^\prime p,p)$ with classes $B_1,\ldots,B_p$ is an induced subgraph of $L_n$ satisfying the following conditions:\\
Let $\widehat{L}_{n-\ell^\prime p}=L_n-T^{\prime}_\ell$. The vertices of $\widehat{L}_{n-\ell^\prime p}$ can be partitioned into $p+2$ classes $C_1,\ldots,C_p,D,$ $E$ such that
\begin{itemize}
\item Every $x\in E$ is joint to every vertex of $T^{\prime}_\ell$ and $|E|=\ell$.
\item If $x\in C_i $ then $x$ is joint to at least $(1-\epsilon-\epsilon^2k)\epsilon^{2\ell}n_3$ vertices of $B_{j\neq i}$ and is joint to no vertex of $B_i$.
\item If $x\in D$ then there are two different classes of $T^{\prime}_\ell$: $B_{i(x)}$ and $B_{j(x)}$ such that $x$ is joint to less than $(1-\epsilon)\epsilon^{2\ell}n_3$ vertices of $B_{i(x)}$  and less than $\epsilon^{2\ell+2}n_3$ vertices of $B_{j(x)}$.
\end{itemize}
Denote by $e_S$ the number of the edges joining $\widehat{L}_{n-\ell^\prime p}$ and $T^\prime_\ell$. Clearly
\begin{equation}\label{2}
e(L_n)=e(T^\prime_\ell)+e_S+e(\widehat{L}_{n-\ell^\prime p}).
\end{equation}
Let $L^\prime_n$ be a representing graph of $c^\prime_n$, where $c^\prime_n$ is an $\mathcal{F}$-free coloring described in the theorem. Similarly, select an induced subgraph $T^{\prime}_\ell$ of $L^\prime_n$. Let $$L_{n-\ell^\prime p}^\prime=L^\prime_n-T^{\prime}_\ell$$ and $e_T$ denote the number of edges of $L^\prime_n$ joining $T^{\prime}_\ell$ with $L_{n-\ell^\prime p}^\prime$. Then we have
\begin{equation}\label{3}
e(L^\prime_n)=e(T^\prime_\ell)+e_T+e(L_{n-\ell^\prime p}^\prime).
\end{equation}
Since $\widehat{L}_{n-\ell^\prime p}$ does not contain any $F\in \mathcal{F}$ as a subgraph, we have $e(\widehat{L}_{n-\ell^\prime p})\leq e(L_{n-\ell^\prime p})$, where $L_{n-\ell^\prime p}$ is a representing graph of an extremal $\mathcal{F}$-free coloring graph on $n-\ell^\prime p$ vertices. By (\ref{2}) and (\ref{3}), we have
\begin{eqnarray*}
\phi(c_n)&=&e(L_n)-e(L^\prime_n)
\\&=&e(T^\prime_\ell)-e(T^\prime_\ell)+(e_S-e_T)+e(\widehat{L}_{n-\ell^\prime p})-e(L^\prime_{n-\ell^\prime p})
\\&\leq&(e_S-e_T)+e(L_{n-\ell^\prime p})-e(L^\prime_{n-\ell^\prime p})
\\&=&(e_S-e_T)+\phi(c_{n-\ell^\prime p}),
\end{eqnarray*}
where $c_{n-\ell^\prime p}$ is an extremal $\mathcal{F}$-free coloring of $K_n$. If $e_S-e_T<0$, then $\phi(c_n)<\phi(c_{n-\ell^\prime p})$, we are done. Hence we may assume $e_S-e_T\geq 0$. Since $\epsilon$ is a small constant, by (\ref{small}) we have
\begin{eqnarray*}
e_S-e_T&\leq&\ell\cdot \ell^\prime p+(n-\ell-\ell^\prime p-|D|)\cdot \ell^\prime (p-1)\\
&&+|D|\cdot [\ell^\prime (p-2)+(1-\epsilon)\epsilon^{2\ell}n_3+\epsilon^{2\ell+2}n_3]\\
&&-[(k-2)\cdot \ell^\prime p+(n-k+2-\ell^\prime p)\cdot \ell^\prime (p-1)]\\
&\leq&  \ell\cdot \ell^\prime p +(n-\ell-\ell^\prime p)\cdot \ell^\prime (p-1) \\
&&-[(k-2)\cdot \ell^\prime p+(n-k+2-\ell^\prime p)\cdot \ell^\prime (p-1)]\\
&\leq &  0,
\end{eqnarray*}
where equality holds if and only if $|D|=0$, $\ell=k-2$ and each vertex of $C_i$ is joint to each vertex of $B_{j\neq i}$ in $L_n$ for $i=1,\ldots,p$. Moreover, there is only one color in $K_n[B_i\cup C_i]$ for $i=1,\ldots,p$ (there is only one color in $\cup_{i=1}^{p}K_n[B_i\cup C_i]$ resp.). In fact, if there are two colors in $K_n[B_i\cup C_i]$ for some $i\in\{1,\ldots,p\}$ (there are two colors in $K_n[B_i\cup C_i\cup B_j\cup C_j]$ for some $i\neq j$ resp.), since we can choose any edge with the same color for the representing graph, there is a representing  graph $L^*_n$ which contains a copy of $T(2vp,p;M_{2k})$ (a copy of $T(2vp,p;M_{2k})$ or $T(2vp,p;M_{2k-2},M_2)$ resp.). Thus $K_n$ contains a rainbow copy of some $F\in\mathcal{F}$ (by Proposition~\ref{proposition0} resp.).

Now we prove Theorem~\ref{main1} (\romannumeral2). Since $T(n-k+2,p)$ has more edges than any other $p$-partite graph, by (\ref{1.1}), we have that each vertex in $E$ is joint any other vertex in $L_n$, $\lfloor(n-k+2)/p \rfloor\leq |B_i\cup C_i|\leq\lceil(n-k+2)/p \rceil$ and each vertex in $B_i\cup C_i$ is joint to each vertex in $B_{j\neq i}\cup C_{j\neq i}$ in $L_n$ for $i=1,\ldots,p$. Hence $c_n$ belongs to the coloring set described in Theorem~\ref{main1} (\romannumeral2).

For Theorem~\ref{main1} (\romannumeral1), since $T(n-k+2,p)$ has more edges than any other $p$-partite graph, by (\ref{1}), there are at least $q$ edges in $\cup_{i=1}^pL_n[B_i\cup C_i]\cup L_n[E]$. Moreover, by the definition of $q$, there are at most $q$ colors in $\cup_{i=1}^pK_n[B_i\cup C_i]\cup K_n[E]$. Otherwise, since every $x\in E$ is joint to every vertex of $T^{\prime}_{k-2}$ and there is only one color in $K_n[B_i\cup C_i]$ for $i=1,\ldots,p$,   $L_n$ contains some $F\in \mathcal{F}$ as a subgraph. Hence, it follows from (\ref{1}) and $T(n-k+2,p)$ has more edges than any other $p$-partite graph that, each vertex in $E$ is joint to each vertex in $\cup_{i=1}^{p}(B_i\cup C_i)$, $\lfloor(n-k+2)/p \rfloor\leq |B_i\cup C_i|\leq\lceil(n-k+2)/p \rceil$ and each vertex in $B_i\cup C_i$ is joint to each vertex in $B_{j\neq i}\cup C_{j\neq i}$ in $L_n$ for $i=1,\ldots,p$, the result follows.\rule{3mm}{3mm}\par\medskip



\noindent{\bf Proof of Theorem~\ref{main2}:}
We only prove Theorem~\ref{main2} (\romannumeral2), since the proof of Theorem~\ref{main2} (\romannumeral1) is essentially the same as the proof of Theorem~\ref{main2} (\romannumeral2). As the proof of Theorem~\ref{main1}, we present a useful proposition.
\begin{proposition}\label{proposition1}
Let $v=\max\{v(F):F\in \mathcal{F}\}$ and $k\geq3$. Let $c_n$ be a coloring of $K_n$ and  $L_n$ be a representing graph of $c_n$.\\
(\romannumeral1) If $L_n$ contains a copy of $T(vp,p;S_{k+1},M_2)$, then $K_n$ contains a rainbow copy of some $F\in \mathcal{F}$.\\
(\romannumeral2) If $L_n$ contains a copy of $T(2vp,p;H)$, then $K_n$ contains a rainbow copy of some $F\in \mathcal{F}$, where  $H\notin\{K_{2,k},S_{2k+1}\}$ is a graph contains  two edge-disjoint copies of $S_{k+1}$.\\
(\romannumeral3) If $L_n$ contains a copy of $Q(p,m)$, then $K_n$ contains a rainbow copy of some $F\in \mathcal{F}$, where $m=m(v,k)$ is a large constant depending on $v$ and $k$.
\end{proposition}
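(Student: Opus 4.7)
The plan is to establish the three parts in cascade: first prove (i) by a direct structural embedding combining the two layers of decomposition data; then reduce (ii) to (i) via a single colour-representative swap; and finally prove (iii) by iterating that swap inside the very large fan $Q(p,m)$ until (ii) applies.

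For (i), I will start with the datum $\mathcal{M}_1(\mathcal{F})=\{S_{k+1}\}$, which supplies some $F\in\mathcal{F}$ and $e\in E(F)$ with $F-e\subseteq (S_{k+1}\cup\overline{K}_t)\vee T(t,p-1)$. The hypotheses $p(\mathcal{F}^{-})=p$ and $\mathcal{M}_0(\mathcal{F})=\{M_2\}$ jointly force $\chi(F)=p+2$, so reinstating $e$ must strictly raise the chromatic number. A quick case-check on where $e$ sits inside the embedding of $F-e$ rules out every placement except one: if $e$ has both endpoints in the $S_{k+1}\cup\overline{K}_t$-part, then either the part stays bipartite (so $\chi(F)$ remains $p+1$, a contradiction) or $e$ joins two leaves of $S_{k+1}$ and creates a triangle, which would force $K_3\in\mathcal{M}_0(\mathcal{F})$, contradicting $\mathcal{M}_0(\mathcal{F})=\{M_2\}$. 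Hence $e$ must lie inside one of the $p-1$ Turán classes, giving $F\subseteq T(vp,p;S_{k+1},M_2)$. Because the representing graph $L_n$ uses each colour on a single edge, this embedded copy of $F$ is automatically rainbow in $K_n$.

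For (ii), I will fix an embedding $T(2vp,p;H)\subseteq L_n$ with $H$ lying inside the partite class $B_1$ and pick two vertices $w_1,w_2$ in a distinct class $B_2$. If $w_1w_2\in L_n$ then the structure $T(2vp,p;H,M_2)$ is already present and (i) applies via $S_{k+1}\subseteq H$. Otherwise let $e'\neq w_1w_2$ be the representing edge of the colour $c_n(w_1w_2)$ and form $L_n'=L_n-e'+w_1w_2$. Three subcases then handle the location of $e'$: if $e'\notin T(2vp,p;H)$, then $L_n'\supseteq T(2vp,p;H,M_2)$; if $e'$ is a Turán edge of $T(2vp,p)$, the fact that each partite class has $2v$ vertices lets me drop the endpoints of $e'$ and still select a copy of $T(vp,p;H,M_2)$ inside $L_n'$; and if $e'\in E(H)$, then the existence of two edge-disjoint copies of $S_{k+1}$ in $H$ guarantees $H-e'\supseteq S_{k+1}$, whence $L_n'\supseteq T(vp,p;S_{k+1},M_2)$. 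In every subcase, (i) furnishes a rainbow $F$. The exclusion $H\notin\{K_{2,k},S_{2k+1}\}$ is inherited from the statement and ensures that no degenerate overlap of the two $S_{k+1}$-copies disrupts the swap.

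For (iii), I will take $m=m(v,k)$ a sufficiently large constant and iterate the swap of (ii) inside $Q(p,m)=K_1\vee T(pm,p)\subseteq L_n$. The assumption that every graph in $\mathcal{F}_1$ contains $Q(p,k)$ anchors each $F\in\mathcal{F}$ to a $Q(p,k)$-skeleton that survives the deletion of any single edge, so by repeatedly swapping representatives of colours corresponding to edges inside the partite classes of $T(pm,p)$, I will gradually promote $Q(p,m)\subseteq L_n$ to a representing graph that contains $T(2vp,p;H)$ for some $H$ with two edge-disjoint copies of $S_{k+1}$ and $H\notin\{K_{2,k},S_{2k+1}\}$; then (ii) finishes the job. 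The main obstacle in (iii) is bookkeeping, since each individual swap removes one edge from the structure being assembled; the role of the largeness $m(v,k)\gg v+k$ is precisely to guarantee that the accumulated damage stays bounded and the fan $Q(p,m)$ keeps supplying the auxiliary vertices and edges needed for each successive step.
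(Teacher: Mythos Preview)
Your outline for (i) and (ii) follows the paper's route, but (i) contains a genuine error and (iii) misses the key idea.

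In (i), your treatment of the triangle case is incorrect. If $e$ joins two leaves of $S_{k+1}$ so that $H_1=S_{k+1}+e$ contains a triangle, this does \emph{not} force $K_3\in\mathcal{M}_0(\mathcal{F})$: since $M_2\subsetneq K_3$, the minimal graph witnessing the embedding is still $M_2$, and $\mathcal{M}_0(\mathcal{F})=\{M_2\}$ remains uncontradicted. The paper rules out this case via $\mathcal{M}_1$, not $\mathcal{M}_0$: for $k\ge 3$, deleting a spoke edge of $H_1$ (rather than $e$) yields a tree $T'$ on $k+1$ vertices with maximum degree $k-1$, and since the corresponding $F-e'\in\mathcal{F}_1$ embeds in $T(vp,p;T')$ while $S_{k+1}\not\subseteq T'$, one contradicts $\mathcal{M}_1(\mathcal{F})=\{S_{k+1}\}$. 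Your (ii) is essentially the paper's colour-swap argument; the one point to tighten is that when the representative $e'$ is a Tur\'an edge incident to a vertex $u\in V(H)$, you may need to drop $u$ from $H$, and it is precisely the exclusion $H\notin\{K_{2,k},S_{2k+1}\}$ that guarantees $H-u$ still contains an $S_{k+1}$.

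Your (iii) is too vague and does not match the paper's mechanism. The paper does not iterate (ii); it reduces directly to (i) via an explicit colour constraint. Writing $x$ for the apex of $Q(p,m)$ and $Q_i=\{x_{i,1},\ldots,x_{i,m}\}$, one first proves the Claim that $c_n(x_{i,s}x_{i,t})\in\{c_n(xx_{i,s}),c_n(xx_{i,t})\}$ for all $i,s,t$ (a single swap otherwise already produces $T(vp,p;S_{k+1},M_2)$). This forces, for each fixed $j$, at least $m-k$ of the edges $x_{i,j}x_{i,\ell}$ to carry the spoke colour $c_n(xx_{i,j})$; from this one assembles a rainbow $S_{k+1}$ inside $K_n[Q_1]$ using $k$ distinct spoke colours, together with an edge in $K_n[Q_2]$ carrying a spoke colour there, giving a rainbow $T(vp,p;S_{k+1},M_2)$. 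Your plan to ``gradually promote $Q(p,m)$ to a representing graph containing $T(2vp,p;H)$'' never identifies this constraint, and without it you have no control over where the removed representatives land; successive swaps can keep destroying the very $Q(p,m)$ you are mining.
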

\begin{proof}
(\romannumeral1) Since $\mathcal{M}_0(\mathcal{F})=\{M_2\}$ and $\mathcal{M}_1(\mathcal{F})=\{S_{k+1}\}$, there exist a graph $F\in \mathcal{F}$ and an edge $e$ such that $F-e \subseteq T(vp,p;S_{k+1})$. Moreover, since $\mathcal{M}_0(\mathcal{F})=\{M_2\}$ and $p(\mathcal{F}^{-})=p$, we have $p(\mathcal{F})=p+1$. Suppose that $F\nsubseteq T(vp,p;S_{k+1},M_2)$. Then $F\subseteq T(vp,p;H_1)$, where $H_1$ is obtained from $S_{k+1}$ by adding an edge. If $H_1$ does not contain a triangle, then we have $p(\mathcal{F})=p$, a contradiction. If $H_1$ contains a triangle, by $k\geq 3$, we have $\mathcal{M}_1(\mathcal{F})\neq\{S_{k+1}\}$ ($\mathcal{M}_1(\mathcal{F})$ contains a tree on $k+1$ vertices with $k$ edges which is not $S_{k+1}$), a contradiction. The result follows.

(\romannumeral2) Without loss of generality, let $B_1,\ldots,B_p$ be the $p$ partite vertex sets of $T(2vp,p;H)$ in $L_n$ where $L_n[B_1]$ contains $H$ as a subgraph.  Let $\{x,y\}\subseteq B_2$ and $c_n(xy)$ be the color of $xy$. Since the graph $H$ satisfies that after deleting any edge or any vertex of it the resulting graph contains $S_{k+1}$ as a subgraph. we obtain that   after deleting the edge in $L_n$ which is colored by $c_n(xy)$ in $K_n$  and adding the edge $xy$ to $L_n$, the obtained graph contains $T(vp,p;S_{k+1},M_2)$ as a subgraph. Hence, $K_n$ contains a rainbow copy of some $F\in \mathcal{F}$.

(\romannumeral3) Let $Q_1,\ldots,Q_p$ be the $p$ partite vertex sets of $Q(p,m)$ and $x$ be the unique vertex in $Q(p,m)$ which is joint to all other vertices. Let $Q_i=\{x_{i,1},\ldots,x_{i,m}\}$ for $i=1,\ldots,p$.\\

{\bf Claim.} Let  $s\neq t$ and  $\{s,t\} \subset \{1,\ldots,m\}$. We have $c_n(x_{i,s}x_{i,t})\in \{c_n(xx_{i,s}),c_n(xx_{i,t})\}$ for $i=1,\ldots,p$.\\

\begin{proof} Otherwise, since $m$ is large, $K_n$ contains a rainbow copy of $T(vp,p;S_{k+1},M_2)$. The result follows from (\romannumeral1).\end{proof}

Moreover, there are at least $m-k$ edges of  $\{x_{i,j}x_{i,1},\ldots,x_{i,j}x_{i,m}\}$ which are colored by $c_n(xx_{i,j})$ for $i=1,\ldots,p$ and $j=1,\ldots,m$. Otherwise, by the claim, there are a rainbow $S_{k+1}$ in $K_n[Q_i]$ which is colored by the colors in $\{c_n(xx_{i,1}),\ldots,c_n(xx_{i,m})\}$ and an edge in $K_n[V_q]$ which is colored by a color in $\{c_n(xx_{q,1}),\ldots,c_n(xx_{q,m})\}$ for $i\neq q$. Hence $K_n$ contains a rainbow copy of $T(vp,p;S_{k+1},M_2)$, we are done. Since $m$ is large, without lose of generality, there is a rainbow $S_{k+1}$ in $K_n[Q_1]$ which is colored by $c_n(xx_{1,1}),\ldots,c_n(xx_{1,k})$ (this follows  from the fact that there are at least $m-k$ edges of  $\{x_{1,\ell}x_{1,1},\ldots,x_{1,\ell}x_{1,m}\}$ which are colored by $c_n(xx_{1,\ell})$ for $\ell=1,\ldots,k$) and an edge in $K_n[Q_{2}]$ which is colored by $c_n(xx_{2,1})$. Thus, $K_n$ contains a rainbow copy of $T(vp,p;S_{k+1},M_2)$ and we finish the proof of the proposition.\end{proof}

\begin{proof} Let $v=\max\{v(F):F\in \mathcal{F}\}$, $\epsilon$ be a small constant only depending on $v$ and $k$ and $N$ be a large even constant depending on $v$ and $k$. We will prove this theorem by progressive induction. Suppose that $c_n$ is an extremal $\mathcal{F}$-free coloring  of $K_n$. It will be shown that, if $n$ is sufficiently large, then $c_n$ belongs to the coloring set described in the theorem. Let $L_n$ be a representing graph of $c_n$. By Lemma~\ref{lemma3}, we have
\begin{equation}\label{4}
e(L_n)\geq t(n,p)+\left\lfloor\frac{(k-2)\lceil\frac{n}{p}\rceil}{2}\right\rfloor+\ldots+\left\lfloor\frac{(k-2)\lfloor\frac{n}{p}\rfloor}{2}\right\rfloor+p
\end{equation}
Hence
\begin{equation}\label{6}
\phi(c_n)=e(L_n)-t(n,p)-\left\lfloor\frac{(k-2)\lceil\frac{n}{p}\rceil}{2}\right\rfloor+\ldots+\left\lfloor\frac{(k-2)\lfloor\frac{n}{p}\rfloor}{2}\right\rfloor-p
\end{equation}
 is a non-negative integer. The theorem will be proved by progressive induction, where $\mathfrak{U}_n$ is the set of extremal $\mathcal{F}$-free colorings of $K_n$. $B$ states that the coloring of $K_n$ belongs to the coloring set described in the theorem, and $\phi(c_n)$ is a non-negative integer. According to the lemma of progressive induction, it is enough to show that if $c_n$ does not belong to the coloring set described in theorem, then there exists an $n^{\prime}$ with $n/2<n^{\prime}<n$ such that $\phi(c_{n^{\prime}})>\phi(c_n)$ provided $n$ is sufficiently large, where $c_{n^\prime}$ is an extremal $\mathcal{F}$-free coloring  of $K_{n^\prime}$. By Theorem~\ref{erdos-stone} and (\ref{4}), there exists an $n_1$ such that if $n>n_1$, then  $L_n$ contains $T(Np,p)$ as a subgraph.

Suppose that $L_n$ does not contain $T(Np/3,p;S_{k+1})$ as a subgraph. Let $T^\prime(Np,p)$ be the subgraph of $L_n$ induced by $V(T(Np,p))$ and $\widetilde{L}_{n-Np}=L_n-T^\prime(Np,p)$. We can partition the vertices of $\widetilde{L}_{n-Np}$ into $C_1,\ldots,C_p,D$ such that if $x\in C_i$ then $x$ is joint to less than $k$ vertices of $B_i$ and more than $(1-\epsilon)N$ vertices of $B_{j\neq i}$, if $x\in D$ then $x$ is joint to at most $(1-\epsilon)N$ vertices of each of two of $B_1,\ldots,B_p$. Furthermore, since $L_n$ does not contain $T(Np/3,p;S_{k+1})$ as a subgraph, if $x\in B_i\cup C_i$, then $x$ is joint to less than $k$ vertices of $B_i\cup C_i$ for $i\in\{1,\ldots,p\}$, and if $x\in D$, then $x$ is joint to less than $(p-1-\epsilon)N+k-1$ vertices of $T^\prime(Np,p)$ (if $x$ is joint to more than $k$ vertices of $B_i$ for some $i\in\{1,\ldots,p\}$, since $L_n$ does not contain $T(Np/3,p;S_{k+1})$ as a subgraph, then $x$ is joint to at most $N/3$ vertices of each of two of $B_1,\ldots,B_p$).
Denote by $e_L$ the number of edges joining $\widetilde{L}_{n-Np}$ and $T^\prime(Np,p)$. We have
\begin{equation}\label{8}
e(L_n)=e(T^\prime_p(Np))+e_L+e(\widetilde{L}_{n-Np}).
\end{equation}
Let $c^\prime_n$ be a coloring of $K_n$ which belongs to the coloring set described in the theorem such that in each partite set there is a rainbow component on $N$ vertices. Since $N$ is a large constant, by Proposition~\ref{k-1 regular}, this is possible. Let $L^\prime_n$ be a representing graph of $c^\prime_n$.  Hence we can choose a subgraph $T^\ast(Np,p)=T(Np,p;\mathcal{U}_{Np,k-1}^{\prime})$  of $L^\prime_n$, and
\begin{equation}\label{9}
e(L^\prime_n)=e(T^\ast(Np,p))+e_{L^\prime}+e(L^\prime_{n-Np}),
\end{equation}
where $L^\prime_{n-Np}$ is a representing graph of $c^\prime_{n-Np}$ and $c^\prime_{n-Np}$ is a coloring of $K_{n-Np}$ belongs to the coloring set described in the theorem. Obviously, $e_{L^\prime}=(n-Np)N(p-1)$.

Since $T^\prime(Np,p)$ does not contain $T(Np/3,p;S_{k+1})$ as a subgraph, we have $e(T^\prime(Np,p))\leq e(T^\ast(Np,p))+N_1$, where $N_1=(k-1)pN/2$. Let $L_{n-Np}$ be a representing graph of an extremal $\mathcal{F}$-free coloring on $K_{n-Np}$. By (\ref{8}) and (\ref{9}), we have
\begin{eqnarray*}
\phi(c_n)&=&e(L_n)-e(L^\prime_n)
\\&=&e(T^\prime(Np,p))-e(T^\ast(Np,p))+(e_{L}-e_{L^\prime})+e(\widetilde{L}_{n-Np})-e(L^\prime_{n-Np})
\\&\leq&(e_{L}-e_{L^\prime})+e(L_{n-Np})-e(L^\prime_{n-Np})+N_1
\\&=&(e_{L}-e_{L^\prime})+\phi(c_{n-Np})+N_1,
\end{eqnarray*}
where $c_{n-Np}$ an extremal $\mathcal{F}$-free coloring  of $K_n$.
Thus
\begin{equation}\label{10}
\phi(c_n)\leq(e_{L}-e_{L^\prime})+\phi(c_{n-Np})+N_1.
\end{equation}
Let $c_{n-1}$ be an extremal $\mathcal{F}$-free coloring  of $K_{n-1}$. It will be proved that if $n$ is large enough, then
\begin{itemize}
\item (a) either $\phi(c_n)<\phi(c_{n-Np})$,
\item (b) or $\phi(c_n)<\phi(c_{n-1})$,
\item (c) or $c_n$ belongs to the coloring set described in the theorem.
\end{itemize}
This will complete our progressive induction.

If there is a vertex $x\in L_n$ with $d_{L_n}(x)<\lfloor n/p\rfloor(p-1)$, then $\phi(c_n)<\phi(c_{n-1})$. In fact, let $L_{n-1}^\ast=L_n-\{x\}$. It does not contain any $F\in \mathcal{F}$ as a subgraph. Thus $e(L_n)-d_{L_n}(x)=e(L_{n-1}^\ast)\leq e(L_{n-1})$ and from this we have
$e(L_n)-e(L_{n-1})\leq d_{L_n}(x)< \lfloor n/p\rfloor(p-1)$. Since $e(L^\prime_n)-e(L^\prime_{n-1})\geq\lfloor n/p\rfloor(p-1)$, we have $\phi(c_n)=e(L_n)-e(L^\prime_n)< e(L_{n-1})-e(L^\prime_{n-1})=\phi(c_{n-1})$.

Suppose now that neither (a) nor (b) holds. Then for each $x\in L_n$, we have $d_{L_n}(x)\geq \lfloor n/p\rfloor(p-1)$  and $\phi(c_n)\geq \phi(c_{n-Np})$.
From (\ref{10}) we have $0\leq\phi(c_n)-\phi(c_{n-Np})\leq e_L-e_{L^\prime}+N_1$.\\

{\bf Claim 1.} There is a constant $N_3$ such that $|D|\leq N_3$.\\

\begin{proof} First recall that $B_i\cup C_i$ does not contain such a vertex which is joint to $k$ other vertices of it. Thus the number of edges joining $B_i$ and $C_i$ is less than $Nk$ and
\begin{equation}\label{11}
e_{L}\leq (n-Np)(p-1)N+Nkp-|D|N_2=e_{L^\prime}+Nkp-|D|N_2,
\end{equation}
since if $x\in D$, then $x$ is joint to less than $(p-1-\epsilon)N+k-1\leq (p-1)N-N_2$ vertices of $T^\prime(Np,p)$. By (\ref{11}) we have
 $|D|\leq (e_{L^\prime}-e_{L}+Nkp)/N_2\leq(N_1+Nkp)/N_2= N_3$.\end{proof}

{\bf Claim 2.} A vertex belonging to $B_i\cup C_i$ is joint to at most to $k-1$ other vertices of $B_i \cup C_i$.\\

\begin{proof} This claim was already proved.\end{proof}

{\bf Claim 3.} $|B_i\cup C_i|=n/p+O(\sqrt{n}).$\\

\begin{proof} In order to show this, omit the edges joining two vertices of the same $B_i\cup C_i$ ($i=1,\ldots,p$) and the edges of $D$. Thus
there remains an $H_{n-|D|}$ which is $p$-chromatic and has $t(n,p)-O(n)$ edges. Applying Lemma~\ref{lemma1} to $H_{n-|D|}$ we obtain the required result. Thus there is a constant $N_4$ such that
$$\left||B_i\cup C_i|-\frac{n}{p}\right|\leq N_4 \sqrt{n}.$$
The result follows.\end{proof}

{\bf Claim 4.} There is a constant $N_5$ such that every $x\in B_i\cup C_i$ is joint to all the vertices of $L_n -(B_i\cup C_i)$ except less than $N_5 \sqrt{n}$ vertices.\\

\begin{proof} This follows immediately from the fact that $x$ is not joint to $n/p- N_4 \sqrt{n} - k$ vertices of $ B_i\cup C_i$ but $\lfloor n/p\rfloor(p-1)\leq d_{L_n}(x)< n $.\end{proof}

Let $D_i$ be the class of those vertices in $D$, which are joint to $B_i\cup C_i$ by less than $k$ edges for $i=1,\ldots,p$.\\

{\bf Claim 5.} $D$ is the disjoint union of $D_1,\ldots,D_p$.\\

\begin{proof} In fact, if $x\in D$, then there is an $i(x)$ such that $x$ is joint to at least $(1/3)(n/p)$ vertices of $B_{j\neq i(x)}\cup C_{j\neq i(x)}$. Otherwise $d_{L_n}(x)<(p-2)\lfloor n/p\rfloor+O(\sqrt{n})+(2/3)(n/p)<\lfloor n/p\rfloor(p-1)$, a contradiction. Furthermore, $x\in D$ is joint to less than $k$ vertices of $B_{i(x)}\cup C_{i(x)}$, otherwise, $L_n$ contains $T(Np/3,p;S_{k+1})$ as a subgraph (without loss of generality, let $i(x)=1$, we select $k$ vertices of $B_1\cup C_1$ which are joint to $x$ and $N/3-k-1$ other vertices of $B_1\cup C_1$. Then select $N/3$ vertices in $B_2\cup C_2$ which are joint to $ x$ and to the $N-1$ vertices considered in $B_1\cup C_1$. Let us continue this selection and lastly select $N/3$ vertices of $B_p\cup C_p$ which are joint to all the $(p-1)N/3$ vertices from $B_1\cup C_1,\ldots,B_{p-1}\cup C_{p-1}$. This is possible, since each vertex selected from is joint to at least $n/p-O(\sqrt{n})$ of $B_{i+1}\cup C_{i+1}$ and $x$ is joint to at least $(1/3)(n/p)$ vertices of $B_{i+1}\cup C_{i+1}$ for $i=1,\ldots,p-1$.). The result follows.\end{proof}

Now we may suppose that $L_n$ contains $T(Np/3,p;S_{k+1})$ as a subgraph with vertex partition $B_1\cup\ldots\cup B_p$. We can partition the vertices of $\widetilde{L}_{n-Np/3}=L_n-V(T(Np/3,p;S_{k+1}))$ into $C_1,\ldots,C_p,D$ such that if $x\in C_i$ then $x$ is joint to more than $(1-\epsilon)N/3$ vertices of $B_{j\neq i}$, if $x\in D$ then $x$ is joint to at most $(1-\epsilon)N/3$ vertices of each of two of $B_1,\ldots,B_p$. Furthermore,  there exists  one vertex in  $B_i\cup C_i$ such that after deleting it any vertex in $B_i\cup C_i$ is joint to less than $k+1$ vertices of $B_i\cup C_i$ for $i=1,\ldots,p$. Otherwise $L_n[B_i\cup C_i]$ contains a graph $H\notin \{S_{2k+1},K_{2,k}\}$ which contains two edge-disjoint copies of $S_{k+1}$. Since $x\in  B_i\cup C_i$ is joint to more than $(1-\epsilon)N/3$ vertices of $B_{j\neq i}$, we have $L_n$ contains $T(Np/3,p;H)$ as a subgraph. Hence, by Proposition~\ref{proposition1} (\romannumeral2), $L_n$ contains some $F\in \mathcal{F} $ as a subgraph, a contradiction. If  $x\in D$, then $x$ is joint to less than $m$ vertices of some $B_i$ for $i\in\{1,\ldots,p\}$. Otherwise $L_n$ contains $Q(p,m)$ as a subgraph.  Hence by Proposition~\ref{proposition1} (\romannumeral3), $L_n$ contains some $F\in \mathcal{F} $ as a subgraph, a contradiction. Thus $x$ is joint to less than $(p-1-\epsilon)N/3+m$ vertices of $T(Np/3,p;S_{k+1})$. Let $B^\prime_i\subseteq B_i$ and $C^\prime_i\supseteq C_i$  such that the unique vertex of $B_i\cup C_i$ which is joint to more than $k$ other vertices of $B_i\cup C_i$ lies in $C^\prime_i$ (if there is no such vertex, then move any vertex of $B_i$ to $C_i$).

Since the number of edges between $B^\prime_i$ and $C^\prime_i$ is at most $kN/3$ and $x\in D$ is joint to less than $(p-1-\epsilon)N/3+m$ vertices of $T(Np/3,p;S_{k+1})$, following from the previous proof, we get Claims 1, 3. For any $x\in B_i\cup C_i$, we have $d_{L_n[B_i\cup C_i]}(x)\leq m$. Otherwise, $L_n$ contains  $Q(p,m)$ as a subgraph, hence by Proposition~\ref{proposition1} (\romannumeral3), $L_n$ contains some $F\in \mathcal{F}$ as a subgraph, a contradiction. This proves Claim 4. Furthermore, if $\Delta(L_n[B_i\cup C_i])\geq k+1$, then there is a vertex $x_i$ such that $\Delta(L_n[B_i\cup C_i\setminus \{x_i\}])\leq k-1$. Otherwise, $L_n[B_i\cup C_i]$ contains an $H\notin\{K_{2,k},S_{2k+1}\}$ which contains  two edge-disjoint copies of $S_{k+1}$. Hence, by Claim 4, $L_n$ contains $T(2vp,p;H)$ as a subgraph, and by Proposition~\ref{proposition1} (\romannumeral2), $L_n$ contains some $F\in \mathcal{F} $ as a subgraph, a contradiction. If $\Delta(L_n[B_i\cup C_i])=k$, then after deleting $k$ vertices of $B_i\cup C_i$ the resulting graph is a graph with maximum degree less than $k$. Otherwise $L_n[B_i\cup C_i]$ contains an $H\notin\{K_{2,k},S_{2k+1}\}$ which contains  two edge-disjoint copies of $S_{k+1}$, a contradiction by Proposition~\ref{proposition1} (\romannumeral2). Thus
\begin{equation}\label{3.3}
e(L_n[B_i\cup C_i])\leq ((k-1)/2)(n/p)+O(\sqrt{n}).
\end{equation}

Let $D_i$ be the class of vertices in $D$ which are joint to $B_i\cup C_i$ by less than $m$ edges. Here we can also get Claim 5, that is $D$ is the disjoint union of $D_1,\ldots,D_p$. In fact, as in the previously proof of Claim 5, if $x\in D$, then there is an $i(x)$ such that $x$ is joint to at least $(1/3)(n/p)$ vertices of $B_{j\neq i(x)}\cup C_{j\neq i(x)}$. Furthermore, $x\in D$ is joint to less than $m$ vertices of $B_{i(x)}\cup C_{i(x)}$ for some $i(x)\in\{1,\ldots,p\}$. Otherwise, $L_n$ contains $Q(p,m)$ as a subgraph (the proof is the same as the proof of Claim 5 in the parenthesis), by Proposition~\ref{proposition1} (\romannumeral3), $L_n$ contains some $F\in \mathcal{F}$ as a subgraph, a contradiction.\\

Let  $E_i=B_{i}\cup C_{i}\cup D_{i}$. We have proved that $||E_i|-n/p|\leq N_6 \sqrt{n}$ and each vertex in $E_i$ is joint to $n/p-O(\sqrt{n})$ vertices of $E_{j\neq i}$ in $L_n$ for $i=1\ldots,p$, where $N_6$ is a large constant.\\



{\bf Case 1.} There is an $S_{k+1}$ in $L_n[E_i]$ for some $i\in \{1,\ldots,p\}$.\\

Choose any $2v-k-1$ vertices of $E_i$. Let $X$ be the vertex set which contains those $2v-k-1$ vertices and the vertices of $S_{k+1}$ in $E_i$. Then the number of common neighbours of $X$ in $E_{j\neq i}$ is $n/p-O(\sqrt{n})$. By Proposition~\ref{proposition1} (\romannumeral1), there is no edge (edges in $L_n$) in the common neighbours of $X$ in $E_{j\neq i}$. Thus, by (\ref{3.3}), we have $e(L_n[E_{i}])\leq((k-1)/2)(n/p)+O(\sqrt{n})$ and $e(L_n[E_{j\neq i}])=O(\sqrt{n})$.
Hence
$$e(L_n)\leq t(n,p)+((k-1)/2)(n/p)+O(\sqrt{n}),$$ a contradiction to (\ref{4}) for $k\geq 4$ or $p\geq 3$.

For $k=3$ and $p=2$, by (\ref{4}) we have $e(L_n[E_i])=n/2+O(\sqrt{n})$ and $e(L_n[E_{3-i}])= O(\sqrt{n})$. Moreover, there is only one color in $K_n[E_{3-i}]$. In fact, suppose that there are at least two colors in $K_n[E_{3-i}]$. Choose two independent edges with different colors, say $c_n(e_1)$ and $c_n(e_2)$, in $K_n[E_{3-i}]$  for $L_n$ ($L_n$ is obtained by changing at most two edges). Let $Y$ be the vertex set which contains the vertices incident with this two edges and $4v-4$ arbitrary vertices of  $E_{3-i}$. Then the number of common neighbours of $Y$  in $E_{3-i}$, denoted by $N(Y)$, is $n/2+O(\sqrt{n})$ and $e(L_n[N(Y)])=n/2+O(\sqrt{n})$. By Proposition~\ref{proposition1} (\romannumeral1), we  have $\Delta(L_n[N(Y)])\leq 2$, otherwise $L_n$ contains a copy of some $F\in \mathcal{F}$. Thus, there are  $n/2+O(\sqrt{n})$ vertices of $N(Y)$ in $L_n[N(Y)]$ with degree two. Choose four of them, say $z_1,z_2,z_3,z_4$, such that the vertices in $\cup_{i=1}^4N_{L_n[N(Y)]}[z_i]$ are distinct vertices (Let $N_G[x]=N_G(x)\cup \{x\}$). We have $c_n(z_1z_2)\neq c_n(z_3z_4)$ and $c_n(z_1z_2)$, $c_n(z_3z_4)$ belong to the colors which incident with one vertex of $Y$  and one vertex of $N(Y)$. Otherwise, $L_n$ contains $T(2v,2;S_4,M_2)$ as a subgraph and by Proposition~\ref{proposition1} (\romannumeral1), $L_n$ contains some $F\in \mathcal{F}$ as a subgraph, a contradiction. Hence, if we choose $z_1z_2$ and $z_3z_4$ for $L_n$, then $L_n[N(Y)]$ contains an $H\notin\{K_{2,3},S_{7}\}$ which contains  two edge-disjoint copies of $S_{4}$. Since each vertex in $E_i$ is joint to $n/2-O(\sqrt{n})$ vertices of $E_{3-i}$ in $L_n$, $L_n$ contains $T(4v,2;H)$ as a subgraph. Thus by Proposition~\ref{proposition1} (\romannumeral2), $L_n$ contains some $F\in \mathcal{F} $ as a subgraph, a contradiction. Now, choose any edge in $K_n[E_i]$ for $L_n$, we have $\Delta(L_n[E_{3-i}])=2$, otherwise $L_n$ contains  $T(2v,2;S_4,M_2)$ as a subgraph, a contradiction. The result follows  by (\ref{4}) and an easy calculation ($T(n,p)$ has more edges then any other $p$-partite graph).\\

{\bf Case 2.} There is no $S_{k+1}$ in $L_n[E_i]$ for $i=1,\ldots,p$.\\

There exists an edge in $L_n[E_i]$ such that after deleting it, for any $x\in E_i$, $x$ is joint to less than $k-1$ vertices of $E_i$ for  $i=1,\ldots,p$. Otherwise, we have $\Delta(L_n[E_i])=k-1$ and $\Delta(L_n[E_i]-\{z\})=k-1$, where $z$ is a vertex of $L_n[E_i]$ with degree $k-1$. Hence $L_n[E_i]$ contains two edge-disjoint $S_k$. Let $x$, $y$ be the vertices of each of the two $S_k$ with degree $k-1$, $c_n(xy)$ be the color of $xy$. Hence there is an $S_{k+1}$ in $L_n[E_i]$ (the graph $L_n$ is obtained by changing one edge), and we are done by Case 1. Thus, after deleting $p$ suitable edges of $L_n$, we can partition the obtained graph $\widetilde{L}_n$ into $E^\prime_1,\ldots,E^\prime_p$ such that $\Delta(\widetilde{L}_n[E^\prime_i])\leq k-2$ for $i=1,\ldots,p$. The result follows  by (\ref{4}) and an easy calculation ($T(n,p)$ has more edges than any other $p$-partite graph).\end{proof}

\remark Our method can also be applied to the case when $k\leq2$, since there is no new idea and the extremal colorings are rather complicated, we skip it. We only point out that, for $k=2$, Proposition~\ref{proposition1} (\romannumeral1) should be changed as following. If $L_n$ contain a copy of $T(vp,p;S_{3},M_2)$ and a copy of $T(vp,p;K_3)$, then $K_n$ contains a rainbow copy of some $F\in \mathcal{F}$.

\section{Conclusion}
If $\mathcal{F}$ only contains bipartite graphs, then $\mathcal{F}$ is determined by its decomposition family sequence. One may think that if $\mathcal{F}$ only contains the graphs with the same chromatic number, then its decomposition family sequence only contains bipartite graphs. The following graph shows that this is not true. Let $H$ be the graph obtained by a $C_6=x_1\ldots x_6$ and adding the edges $x_1x_3$, $x_3x_5$ and $x_5x_1$. Then $\mathcal{M}_0(\{H\})=\{K_3,P_4\}$.

\begin{center}
\begin{tikzpicture}[scale = 0.5]
\filldraw[fill=black] (0,0) circle(2pt);
\filldraw[fill=black] (-1,1.732) circle(2pt);
\filldraw[fill=black] (1,1.732) circle(2pt);
\filldraw[fill=black] (0,3.464) circle(2pt);
\filldraw[fill=black] (2,3.464) circle(2pt);
\filldraw[fill=black] (-2,3.464) circle(2pt);
\draw [line width=1pt](0,0) -- (2,3.464);
\draw [line width=1pt](0,0) -- (-2,3.464);
\draw [line width=1pt](2,3.464) -- (-2,3.464);
\draw [line width=1pt](0,3.464) -- (1,1.732);
\draw [line width=1pt](0,3.464) -- (-1,1.732);
\draw [line width=1pt](-1,1.732) -- (1,1.732);
\end{tikzpicture}
\end{center}

By Theorem~\ref{p+1 chromatic}, the Tur\'{a}n number of a family of graphs is asymptotically determined by the graph with minimal chromatic number in the family (unless this family of graphs contains a bipartite graph). However, the graphs with larger chromatic number may influence the subtle structure of the extremal graphs. This is one important motivation for studying decomposition family sequences of graphs.




There is a family of graphs such that the extremal graphs of those graphs are determined by its decomposition family provided $n$ is sufficiently large. We say that a graph is {\it edge-critical} if it contains an edge whose deletion reduces the chromatic number of it. In 1968, Simonovits \cite{Simonovits1968} proved the following theorems.

\begin{theorem}\cite{Simonovits1968}\label{Chromatic Critical Edge}
Let $F_1,\ldots,F_\ell$ be given graphs, such that $\chi(F_i)\geq p+1$ $(i=1,\ldots,\ell)$ but there are an $F_{i_o}$ and an edge $e$ in it such that $\chi(F_{j}-\{e\})=p$, where $j\in\{1,\ldots,\ell\}$. Then there exists an $n_0$ such that if $n>n_0$ then $T(n,p)$ is the unique extremal graph for $F_1,\ldots,F_\ell$.
\end{theorem}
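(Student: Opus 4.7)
My plan is to apply Lemma~\ref{progrssion induction} (progressive induction), taking $\mathfrak{U}_n$ to be the set of extremal $\{F_1,\ldots,F_\ell\}$-free graphs on $n$ vertices, the property $B$ to be ``$G = T(n,p)$'', and $\phi(G) = e(G) - t(n,p)$. Since $\chi(F_i) \geq p+1$ for every $i$, the Tur\'an graph $T(n,p)$ contains no $F_i$, so every extremal $G$ satisfies $e(G) \geq t(n,p)$ and $\phi(G)$ is a non-negative integer. The key structural input is the hypothesis that some $F_{i_0}$ has an edge $e$ with $\chi(F_{i_0}-e) = p$; equivalently, $K_2 \in \mathcal{M}(\{F_1,\ldots,F_\ell\})$, so there exists $t = t(F_{i_0})$ with $F_{i_0} \subseteq (K_2 \cup \overline{K}_t) \vee T(t,p-1)$.

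Next I would mirror the cleanup strategy used in the proofs of Theorems~\ref{main1} and~\ref{main2}. By Theorem~\ref{erdos-stone} and the edge lower bound, $G$ contains $T(Np,p)$ as a subgraph for any fixed constant $N$, and by removing a bounded matching from each class I can assume this $T(Np,p)$ is an induced subgraph, with classes $B_1,\ldots,B_p$. I then partition the remaining vertices into sets $C_1,\ldots,C_p$ (those adjacent to almost all of every $B_j$, $j \neq i$) together with a ``bad'' set $D$ (vertices sparse to two or more classes). Using $K_2 \in \mathcal{M}$, any edge inside $B_i \cup C_i$, or from $C_i$ into $B_i$, would provide the required $K_2$ sitting inside a class of a large $T(t,p-1)$-like structure available in the other classes, thereby embedding a forbidden copy of $F_{i_0}$. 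Hence $V(G) \setminus D$ admits a proper $p$-partition with no internal edges, and Lemma~\ref{lemma1} together with a minimum-degree argument forces $|D| = O(1)$ and class sizes within $O(\sqrt{n})$ of $n/p$.

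To finish the progressive induction I would argue that either $G = T(n,p)$ (and the property $B$ holds), or else some vertex $v$ (either a member of $D$, or one incident to an internal edge) has degree smaller than the corresponding vertex in $T(n,p)$, so that deleting $v$ gives $\phi(G - v) > \phi(G)$ with $n' = n-1 > n/2$, as demanded by the lemma. The main obstacle will be ruling out stable but non-Tur\'an configurations carrying a nonempty bad set $D$, because in principle vertices of $D$ could compensate for missing Tur\'an edges through atypical connections to the $B_j \cup C_j$; the edge-critical hypothesis must be applied individually to each such vertex, showing that any residual edge inside a class yields a copy of $F_{i_0}$, which together with the fact that $T(n,p)$ is the unique edge-maximum $p$-partite graph on $n$ vertices finally forces $D = \emptyset$ and $G = T(n,p)$.
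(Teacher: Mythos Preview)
The paper does not give its own proof of this theorem: it is quoted in the concluding Section~4 with a citation to Simonovits~\cite{Simonovits1968}, so there is nothing in the paper to compare your argument against directly. That said, your outline is precisely the progressive-induction template that Simonovits used in~\cite{Simonovits1968} and that the present paper reuses (in the proofs of Theorems~\ref{main1} and~\ref{main2}), so in spirit your approach matches the intended one.

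One technical slip to fix: in your last paragraph you write that deleting a low-degree vertex $v$ yields $\phi(G-v) > \phi(G)$ and then feed $G-v$ back into Lemma~\ref{progrssion induction}. But $\mathfrak{U}_{n-1}$ consists of \emph{extremal} $\{F_1,\dots,F_\ell\}$-free graphs, and $G-v$ need not be extremal. The correct step (as carried out in the paper's proofs of Theorems~\ref{main1} and~\ref{main2}) is to observe that $e(G)-d_G(v)=e(G-v)\le e(G^{\mathrm{ext}}_{n-1})$ for an extremal $G^{\mathrm{ext}}_{n-1}\in\mathfrak{U}_{n-1}$, and then compare $\phi(G)$ with $\phi(G^{\mathrm{ext}}_{n-1})$ using $t(n,p)-t(n-1,p)=n-\lceil n/p\rceil$. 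With that adjustment your plan goes through: once every vertex has degree at least $n-\lceil n/p\rceil$, each $x\in D$ would be adjacent to at least $k$ (in fact at least one suffices here, since $\mathcal{M}=\{K_2\}$) vertices of some $B_i\cup C_i$ and simultaneously to large pieces of all other classes, producing a copy of $F_{i_0}$; hence $D=\emptyset$, the graph is $p$-partite, and the uniqueness of $T(n,p)$ as the edge-maximum $p$-partite graph finishes the argument.
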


Even more, the converse of Theorem~\ref{Chromatic Critical Edge} is also true.
\begin{theorem}\cite{Simonovits1968}\label{Chromatic Critical Edge Inverse}
Let $F_1,\ldots,F_\ell$ be given graphs, if there exists an $n_0$ such that when $n>n_0$ then $T(n,p)$ is the unique extremal graph for $F_1,\ldots,F_\ell$, then $\chi(F_i)\geq p+1$ $(i=1,\ldots,\ell)$ and there are an $F_{j}$ and an edge $e$ in it such that $\chi(F_{j}-\{e\})=p$, where $j\in\{1,\ldots,\ell\}$.
\end{theorem}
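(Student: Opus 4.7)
The plan is to prove both conclusions by a direct contradiction argument, exploiting only the fact that $T(n,p)$ is the \emph{unique} extremal graph for large $n$.

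\textbf{Step 1 (chromatic lower bound).} First I would show $\chi(F_i)\geq p+1$ for every $i$. Suppose toward contradiction that some $F_i$ has $\chi(F_i)\leq p$. Then $F_i$ embeds in the complete balanced $p$-partite graph on $p\cdot v(F_i)$ vertices, and hence $F_i\subseteq T(n,p)$ for every $n\geq p\cdot v(F_i)$. But then $T(n,p)$ itself contains a forbidden subgraph, so $T(n,p)$ cannot be an $\{F_1,\ldots,F_\ell\}$-free graph at all, contradicting the assumption that $T(n,p)$ is extremal for large $n$.

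\textbf{Step 2 (existence of a color-critical edge).} Suppose toward contradiction that for every $i$ and every edge $e'\in E(F_i)$ we have $\chi(F_i-e')\geq p+1$. Fix $n>n_0$ large and add any one extra edge $e$ to $T(n,p)$, obtaining a graph $G:=T(n,p)+e$ with $t(n,p)+1$ edges. By the uniqueness of $T(n,p)$ as the extremal graph, $G$ must contain some $F_j$ as a subgraph. Since $T(n,p)$ itself is $\{F_1,\ldots,F_\ell\}$-free, any copy of $F_j$ in $G$ must use the new edge $e$. Let $e'\in E(F_j)$ be the edge of $F_j$ mapped onto $e$ under this embedding. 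Then $F_j-e'\subseteq T(n,p)$, so $\chi(F_j-e')\leq\chi(T(n,p))=p$. Combined with $\chi(F_j)\geq p+1$ (from Step 1) and the elementary inequality $\chi(F_j)\leq \chi(F_j-e')+1$, this forces $\chi(F_j-e')=p$, contradicting the standing assumption and completing the proof.

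\textbf{Expected obstacle.} There is no real obstacle here; both steps are short once one notices the correct one-edge perturbation $T(n,p)+e$. The only subtlety worth checking carefully is that in Step 1 we need $n_0$ large enough for the embedding $F_i\subseteq T(n,p)$ to go through (taking $n_0\geq p\cdot \max_i v(F_i)$ suffices), and in Step 2 that the \emph{uniqueness} of the extremal graph (not merely extremality of $T(n,p)$) is used: it is what forces $T(n,p)+e$ to contain a forbidden subgraph. If one only had that $T(n,p)$ \emph{is an} extremal graph, Step 2 would fail, so the hypothesis is used in an essential way.
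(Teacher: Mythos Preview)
Your proposal is correct. Note, however, that the paper does not actually give a proof of this theorem: it is quoted from Simonovits~\cite{Simonovits1968} in the concluding section as a known result, without argument. So there is no ``paper's own proof'' to compare against; your argument is the standard one.

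One small remark on your ``Expected obstacle'' paragraph: you write that uniqueness of $T(n,p)$ is essential for Step~2, but in fact it is not. If $T(n,p)$ is merely \emph{an} extremal graph, then by definition $\mathrm{ex}(n,\{F_1,\dots,F_\ell\})=t(n,p)$, so any graph on $n$ vertices with $t(n,p)+1$ edges already contains some $F_j$. Your Step~2 goes through verbatim under the weaker hypothesis. (Of course the theorem as stated assumes uniqueness, so there is no error in your proof; it just proves slightly more than you claim.)
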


Hence, it is interesting to ask the following questions.

\begin{question}\label{Q5}
Find new family of graphs whose decomposition family determines its extremal graphs, or much weaker, find new family of graphs whose first two graph sets of decomposition family sequence determine its extremal graphs.
\end{question}

\begin{question}\label{Q4}
Find extremal graphs such that the forbidden graphs are determined by it in a certain sense. Which graph can be an extremal graph for a finite graph set $\mathcal{L}$?
\end{question}

For Question~\ref{Q5}, as mentioned in Section 1.2, there exist graphs such that the decomposition family of them do not determine the extremal graphs of them.  For Question~\ref{Q4}, we refer the interested readers to \cite{Simonovits1982,Yuan1}.

Although, the definition of decomposition family sequence of a graph is simple, determining the decomposition family sequence of a graph is very complicate. Here we only show that $K_5$ is determined by its decomposition family sequence.

\begin{proposition}
$K_5$ is determined by its decomposition family sequence.
\end{proposition}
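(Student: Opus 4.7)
The plan is to compute the decomposition family sequence of $\{K_5\}$ explicitly and then to verify that no other family of graphs produces the same sequence. Since $p(\{K_5\})=4$, the sequence has five terms $\mathcal{M}_0,\ldots,\mathcal{M}_4$, obtained by iterating the recurrence $\mathcal{F}_{i+1}=\mathcal{F}_{i}^{\mathcal{M}(\mathcal{F}_i)-}$ starting from $\mathcal{F}_0=\{K_5\}$.

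First I would compute the sequence by a direct case analysis. We have $\mathcal{M}_0(\{K_5\})=\{M_2\}$ because $K_5$ embeds in $(M_2\cup\overline{K}_t)\vee T(t,3)$ using one edge in the first class together with one vertex from each of the three Tur\'{a}n classes, while any edgeless $M$ gives ambient chromatic number only $4$. This yields $\mathcal{F}_1=\{K_5-e\}$ with $p=3$, and an analogous placement argument gives $\mathcal{M}_1=\{M_2\}$. Next $\mathcal{F}_2=\{K_5-P_3,\,K_5-M_4\}$ with $p(\mathcal{F}_2)=2$; embedding $K_5-M_4$ in $(M\cup\overline{K}_t)\vee\overline{K}_t$ forces the universal vertex of $K_5-M_4$ together with one endpoint of each non-edge to lie in $V(M)$, so $M\supseteq P_3$; embedding $K_5-P_3$ instead requires a larger $M$ containing $K_3$, so minimality yields $\mathcal{M}_2=\{P_3\}$. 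Iterating once more, $\mathcal{F}_3$ contains the bipartite graph $K_{2,3}$ (remove $\{12,34\}$ from $K_5$ then the path $1{-}5{-}2$), so $p(\mathcal{F}_3)=1$; the family $\mathcal{M}_3$ then consists of the minimal members of $\mathcal{F}_3$ in the subgraph order (such as $K_4$, arising from $K_4\cup K_1\in\mathcal{F}_3$, and $K_{2,3}$), and $\mathcal{M}_4$ is computed analogously from $\mathcal{F}_4$.

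For uniqueness, suppose $\mathcal{G}$ is a family whose decomposition family sequence coincides with the one computed above. The length of the sequence forces $p(\mathcal{G})=4$, and $\mathcal{M}_0(\mathcal{G})=\{M_2\}$ forces every $L\in\mathcal{G}$ to be $5$-chromatic and to embed minimally into $(M_2\cup\overline{K}_t)\vee T(t,3)$, so in particular $L$ contains a $K_5$. The constraints $\mathcal{M}_1=\{M_2\}$ and $\mathcal{M}_2=\{P_3\}$ then determine the structure of $L$ after one and two rounds of edge removal, and the explicit forms of $\mathcal{M}_3,\mathcal{M}_4$ pin down the essential vertex count. The principal obstacle is exactly this uniqueness step: the map from a family to its sequence is not a priori injective, so one must verify that every near-$K_5$ candidate (a $K_5$ with an extra pendant, an added edge inside a Tur\'{a}n class, or a strictly larger vertex set) perturbs some $\mathcal{M}_i$. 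This is carried out level by level: any additional vertex of $L$ contributes a new minimal element to $\mathcal{M}_i$ at the stage where $p(\mathcal{F}_i)$ drops, and any additional edge inside a class forces a new member of $\mathcal{M}_j$ for some $j\le 2$, since each $\mathcal{M}_i(\{K_5\})$ is a single small graph whose minimality is destroyed by such a perturbation. Thus $L=K_5$ and $\mathcal{G}=\{K_5\}$.
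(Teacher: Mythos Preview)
Your overall plan (compute the sequence, then argue uniqueness) matches the paper's, but both the computation and the uniqueness argument diverge from the paper in essential ways, and the uniqueness argument has a real gap.

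On the computation: the paper gets $\mathcal{M}_2(\{K_5\})=\{S_3,M_4\}$, not $\{P_3\}$, and then $\mathcal{F}_3=\mathcal{M}_3=\mathcal{G}_{5,6}$ (all $5$-vertex, $6$-edge graphs). You stop short of actually computing $\mathcal{M}_3$ and $\mathcal{M}_4$, writing only ``such as $K_4$ \ldots\ and $K_{2,3}$'' and ``analogously''. Since the uniqueness argument must use the full sequence, leaving these terms unspecified is not a minor omission.

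The serious gap is in the uniqueness direction. You assert that $\mathcal{M}_0(\mathcal{G})=\{M_2\}$ together with $p(\mathcal{G})=4$ forces every $L\in\mathcal{G}$ to contain a $K_5$. This is false: those two conditions say exactly that every $L\in\mathcal{G}$ has $\chi(L)\ge 5$ and that some $L$ becomes $4$-colourable after deleting a single edge, i.e.\ some $L$ is $5$-edge-critical. There are $5$-edge-critical graphs with no $K_5$ (indeed triangle-free ones, via Mycielski-type constructions), so nothing here pins down $K_5$. The subsequent ``level by level'' perturbation argument is never actually carried out, and the claim that an extra vertex or edge must perturb some $\mathcal{M}_i$ is exactly what needs proof.

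The paper avoids this difficulty by working \emph{backwards}. The last computed term satisfies $\mathcal{M}_3=\mathcal{G}_{5,6}$, which already fixes both the vertex count ($5$) and the edge count ($6$) of $\mathcal{F}_3$; one then observes that $\{S_3,M_4\}$ comprises \emph{all} two-edge graphs (up to isolated vertices), so passing from $\mathcal{F}_3$ to $\mathcal{F}_2$ adds back an arbitrary pair of edges, giving $\mathcal{F}_2=\mathcal{G}_{5,8}$; similarly $\mathcal{M}_1=\mathcal{M}_0=\{K_2\}$ force $\mathcal{F}_1=\mathcal{G}_{5,9}$ and $\mathcal{F}_0=\mathcal{G}_{5,10}=\{K_5\}$. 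The point is that the final term of the sequence carries the vertex information directly, and the earlier terms (each being ``all graphs with a fixed number of edges'') let you reconstruct edge counts step by step --- no structural statement like ``$L\supseteq K_5$'' is ever needed. Your forward approach would require substantially more work to reach the same conclusion.
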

\begin{proof} Let $\mathcal{G}_{n,e}$ be the set of graphs with $n$ vertices and $e$ edges. Let $\mathcal{F}=\mathcal{F}_0=\{K_5\}$, by a simple observation, we have
$$
\mathcal{M}_0(\mathcal{F})=\{K_2\},\mbox{ }\mathcal{M}_1(\mathcal{F})=\{K_2\},\mbox{ } \mathcal{M}_2(\mathcal{F})=\{S_3,M_4\},\mbox{ }\mathcal{M}_3(\mathcal{F})=\mathcal{G}_{5,6}$$
and
$$\mathcal{F}_0=\mathcal{G}_{5,10},\mbox{ }\mathcal{F}_1=\mathcal{G}_{5,9},\mbox{ }\mathcal{F}_2=\mathcal{G}_{5,8},\mbox{ } \mathcal{F}_3=\mathcal{G}_{5,6}.
$$
Now let $$
\mathcal{M}_0(\mathcal{F})=\{K_2\},\mbox{ }\mathcal{M}_1(\mathcal{F})=\{K_2\},\mbox{ } \mathcal{M}_2(\mathcal{F})=\{S_3,M_4\},\mbox{ }\mathcal{M}_3(\mathcal{F})=\mathcal{G}_{5,6}.$$
Since $\mathcal{G}_{n,e}$ contains all the graphs with $n$ vertices and $e$ edges and $S_3,M_4$ are the only graphs with two edges (regardless of isolated vertices), we have $\mathcal{F}_3=\mathcal{G}_{5,6}$ and hence  $\mathcal{F}_2=\mathcal{G}_{5,8}$   $\mathcal{F}_1=\mathcal{G}_{5,9}$  and  $\mathcal{F}_0=\mathcal{G}_{5,10}$. The result follows since $\mathcal{G}_{5,10}=\{K_5\}$. \end{proof}

\end{document}